\documentclass[english,a4paper,reqno,11pt]{amsart}
\usepackage[T1]{fontenc}
\usepackage{
    babel,
    lmodern,
    mathtools,
    amssymb,
    enumerate,
    amsrefs,
    amsthm,
    thmtools,
    xcolor,
    hyperref,
    cleveref
}
\usepackage[all]{xy}

\newtheorem{teo}{Theorem}[section]
\newtheorem{prop}[teo]{Proposition}
\newtheorem{lema}[teo]{Lemma}

\theoremstyle{definition}
	\newtheorem{dfn}[teo]{Definition}
	\newtheorem{obs}[teo]{Remark}
	
	\newtheorem{ex}[teo]{Example}

\newcommand{\Sp}{\mathrel{\underline{\#}}}
\newcommand{\Tr}{\mathrel{\underline{\otimes}}}
\newcommand{\ud}[1]{_{_{#1}}}

\newcommand{\Hp}{{\ensuremath{{}^{\underline{\mathrm{H}}}}}}
\newcommand{\Hpp}{{\ensuremath{{}^{\underline{\underline{\mathrm{H}}}}}}}
\newcommand{\ah}{{\ensuremath{A\Hp}}\relax}
\newcommand{\ahr}{{\ensuremath{A\Hpp}}\relax}
\newcommand{\acoh}{{\ensuremath{A^{\underline{\mathrm{coH}}}}}\relax}
\newcommand{\acohr}{{\ensuremath{A^{\underline{\underline{\mathrm{coH}}}}}}\relax}
\newcommand{\acohs}{{\ensuremath{A^{\underline{\mathrm{coH^\ast}}}}}\relax}
\newcommand{\acohsr}{{\ensuremath{A^{\underline{\underline{\mathrm{coH^\ast}}}}}}\relax}
\newcommand{\ahop}{{\ensuremath{\ah^{\mathrm{op}}}}}
\newcommand{\ashp}{{\ensuremath{A \Sp H}\relax}}
\newcommand{\ashpp}{{(\ashp)\Hp}}
\newcommand{\Ape}{\mathrel{\cdot}}
\newcommand{\Acao}{\mathrel{\triangleright}}
\newcommand{\M}{{\ensuremath{\mathcal{M}}}}
\newcommand{\Can}{\textrm{\textit{can}} }

\newcommand{\Not}[2][0]{%
	\setcounter{enumi}{#1}%
	\renewcommand{\theenumi}{#2\arabic{enumi}}%
	\renewcommand{\labelenumi}{(\theenumi)}%
	\settowidth{\itemindent}{\hskip-\labelsep#2}%
}

\let\originaleqnarray\eqnarray  
\def\eqnarray{\settowidth{\arraycolsep}{$\mskip 0.5\thickmuskip$}\originaleqnarray}
\expandafter\let\expandafter\eqnarraystar
\csname eqnarray*\endcsname  
\expandafter\def\csname eqnarray*\endcsname
{\settowidth{\arraycolsep}{$\mskip 0.5\thickmuskip$}\eqnarraystar}

\begin{document}\thispagestyle{empty}

\title{Partial Hopf-Galois theory}
\author[Castro, Freitas, Paques, Quadros, Tamusiunas]{Felipe Castro, Daiane Freitas, Antonio Paques, Glauber Quadros, Thaísa Tamusiunas}


\begin{abstract}
We develop a partial Hopf-Galois theory for partial $H$-module algebras and we recover analogs of classical results for Hopf algebras.
\end{abstract}

\maketitle

\

\noindent \textbf{2010 AMS Subject Classification:} Primary 16T05. Secondary 16T15, 16W22.

\noindent \textbf{Keywords:} Hopf-Galois theory, partial actions, Hopf algebra, Frobenius extension.

\section{Introduction}
The concept of partial actions and coactions of Hopf algebras on algebras were introduced by Caenepeel and Janssen in \cite{CJ}, generalizing left $H$-module algebras and right $H$-comodule algebras, respectively. 

A left partial action of the Hopf algebra $H$ on the algebra $A$ is a linear mapping $\alpha: H \otimes A \rightarrow A$, denoted here by $\alpha(h \otimes a) = h \cdot a$, such that
\begin{enumerate}\Not{PA}
    \item $h \cdot (ab) = (h_{1} \cdot a)(h_2 \cdot b)$,
    \item $ 1_H \cdot a = a$,
    \item $h \cdot (k \cdot a) = (h_1 \cdot 1_A)((h_2k) \cdot a), \forall h, k \in H, a \in A$.
\end{enumerate}
In this case, we call A a \emph{left partial H-module algebra}.

There is a class of partial actions in which the subalgebra of left invariants is equal to the subalgebra of right invariants, called \emph{symmetric partial actions}. We say that a partial action of $H$ on $A$ is \emph{symmetric} if it has the additional property:
\begin{enumerate}\Not[3]{PA}
    \item $h \cdot (k \cdot a) = ((h_1k) \cdot a)(h_2 \cdot 1_A), \forall h, k \in H, a \in A$.
\end{enumerate}
In this case, we call A a \emph{left symmetric partial H-module algebra}.

Our purpose in this paper is to construct equivalences to the definition of Hopf-Galois extension for this class of partial actions. Global actions are symmetric, so, in particular, we recover the equivalences developed in \cite{M}*{Theorem~8.3.3}. Also, given a globalizable partial action $\alpha$ of a group $G$ on the algebra $A$, the partial (Hopf) action of the group algebra $\Bbbk G$ induced by $G$ is symmetric \cite{CPQS}*{Lemma~3.4}. Thus symmetric partial actions of Hopf algebras extend (globalizable) partial actions of groups.

The paper is organized as follows. In \Cref{sec:preliminares} we present some preliminary results about the subalgebras of invariants and integrals. In \Cref{sec:Frobenius} we prove that if $A$ is a symmetric partial $H$-module algebra, then $A \Sp H / A$ is Frobenius and we use it to show a characterization about the fixed part of the partial smash product in terms of integrals. In fact, we show that $\ashpp = \left( 1 \Sp \int_{l}^H \right)(A \Sp 1_H)$, where $\int_{l}^H$ is the set of left integrals in $H$. \Cref{sec:Galois} is destined for Galois theory. In \Cref{galois} we give seven equivalences for the definition of Hopf-Galois partial extension.

For convention, unless otherwise specified, $H$ is a  Hopf algebra over a field $\Bbbk$ with antipode $S$ and $A$ is a left partial $H$-module algebra. Unadorned $\otimes$ means $\otimes_{\Bbbk}$. Rings and algebras are associative and unital. For all the paper we will adopt the Sweedler notation (summation understood).

\section{Preliminary}\label{sec:preliminares}

Here we present the background about invariants, coinvariants and integrals.

\subsection{Subalgebra of invariants and of coinvariants}

We begin this subsection setting the left and the right invariant parts of a partial $H$-module algebra $A$. Then we show that they coincide if $A$ is symmetric.

Define the following sets:
\begin{align*}
    \ah &:= \{ a\in A\mid h\Ape a = a (h\Ape 1_A), \forall h \in H \};\\
    \intertext{and}
    \ahr &:= \{ a\in A\mid h\Ape a = (h\Ape 1_A) a, \forall h \in H \}.
\end{align*}

We call $\ah$ the set of \emph{left invariants} and $\ahr$ the set of \emph{right invariants by the partial action of $H$ on $A$}.

\begin{lema}
    In the same notations above, $\ah\subseteq\ahr$ are subalgebras of $A$. Moreover, if the set $H\Ape 1_A = \{h \Ape 1_A \mid h \in H\}$ commutes with $\ahr$, then $\ah=\ahr$.
\end{lema}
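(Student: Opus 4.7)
My plan is to dissect the statement into three sub-claims and handle them sequentially: (i) $\ah$ and $\ahr$ are subalgebras of $A$; (ii) the inclusion $\ah \subseteq \ahr$ holds unconditionally; (iii) under the commutativity hypothesis, the reverse inclusion $\ahr \subseteq \ah$ also holds. The hardest part will be (ii): since the defining relations of $\ah$ and $\ahr$ place $h \cdot 1_A$ on opposite sides of $a$ and no commutativity is assumed a priori, moving $h \cdot 1_A$ past $a$ will require an antipode argument.

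For the subalgebra claims, $1_A$ visibly lies in both sets, and both are $\Bbbk$-linear. For closure of $\ah$ under multiplication, given $a, b \in \ah$ I would chain
\begin{align*}
h \cdot (ab) = (h_1 \cdot a)(h_2 \cdot b) = a(h_1 \cdot 1_A)(h_2 \cdot b) = a (h \cdot b) = ab (h \cdot 1_A),
\end{align*}
where (PA1) is used both to split $h \cdot (ab)$ and, in reverse, to collapse $(h_1 \cdot 1_A)(h_2 \cdot b) = h \cdot (1_A b)$. The mirror argument for $\ahr$ uses $(h_1 \cdot a)(h_2 \cdot 1_A) = h \cdot (a \cdot 1_A) = h \cdot a = (h \cdot 1_A) a$ to push $h \cdot 1_A$ to the far left.

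The crux is (ii), and the starting point will be the identity
\[
h_1 \cdot (S(h_2) \cdot a) = (h \cdot 1_A)\, a \qquad \text{for every } a \in A,
\]
which I would derive by applying (PA3) with inputs $(h_1, S(h_2))$ and simplifying the surviving factor via the antipode relation $h_2 S(h_3) = \epsilon(h_2)\, 1_H$. Given $a \in \ah$, the defining equality $S(h_2) \cdot a = a (S(h_2) \cdot 1_A)$ lets me rewrite the same left-hand side via (PA1) as
\[
h_1 \cdot (S(h_2) \cdot a) = (h_1 \cdot a)\bigl(h_2 \cdot (S(h_3) \cdot 1_A)\bigr);
\]
the identity applied with $a = 1_A$ collapses the rightmost factor to $h_2 \cdot 1_A$, and a second use of (PA1) recovers $(h_1 \cdot a)(h_2 \cdot 1_A) = h \cdot a$. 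Comparing the two evaluations yields $h \cdot a = (h \cdot 1_A)\, a$, so $a \in \ahr$. Part (iii) is then immediate: for $a \in \ahr$, the commutativity hypothesis gives $h \cdot a = (h \cdot 1_A) a = a(h \cdot 1_A)$, whence $a \in \ah$ and the sets coincide.
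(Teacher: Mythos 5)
Your proposal is correct and follows essentially the same route as the paper's proof: the subalgebra closures via (PA1), the key identity $h_1\cdot(S(h_2)\cdot a)=(h\cdot 1_A)a$ derived from (PA3) and the antipode axiom to establish $\ah\subseteq\ahr$, and the immediate use of the commutativity hypothesis for the reverse inclusion. All steps check out.
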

\begin{proof}
    Let's check that $\ahr$ is a subalgebra of $A$. The other case follows analougosly. 

	Let $a, b\in\ahr$. Thus
	\begin{align*}
		h \Ape ab
			&= (h_1\Ape a)(h_2\Ape b)\\
			&= (h_1\Ape a)(h_2\Ape 1_A)b\\
			&= (h\Ape a) b\\
			&= (h\Ape 1_A) ab.
	\end{align*}
	Therefore, $ab\in\ahr$  and $\ahr$ is a subalgebra of $A$.
	
	To show that $\ah\subseteq\ahr$, take $a\in\ah$. Then
	\begin{align*}
		(h\Ape 1_A) a
			&= h_1\Ape[1_A (S(h_2)\Ape a)]\\
			&= h_1\Ape[a (S(h_2)\Ape 1_A)]\\
			&= (h_1\Ape a) (h_2 S(h_3)\Ape 1_A)\\
			&= h\Ape a.
	\end{align*}
	Thus, $\ah\subseteq\ahr$.
	
	Clearly, if $H\Ape 1_A$ commutes with $\ahr$, the equality $\ah=\ahr$ holds.
\end{proof}

\begin{prop}
	If the antipode $S$ is bijective and $A$ is a symmetric partial $H$-module algebra, then $H \Ape 1_A$ commutes with $\ahr$.
\end{prop}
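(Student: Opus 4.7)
The plan is to mirror the previous Lemma's argument, now using $S^{-1}$ and (PA4) in place of $S$ and (PA3); bijectivity of $S$ is what lets us do that, and the symmetric hypothesis is what makes (PA4) available. I aim to show directly that $a(h \Ape 1_A) = (h \Ape 1_A)\,a$ for every $a \in \ahr$ and $h \in H$.

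The first step is to establish the ``right-hand companion'' identity
\[
b\,(h \Ape 1_A) \;=\; \sum h_2 \Ape \bigl(S^{-1}(h_1) \Ape b\bigr), \qquad \forall\, b \in A,\ h \in H.
\]
I would prove this by expanding the right-hand side via (PA4) on the outer $h_2$, obtaining $\sum \bigl((h_2 S^{-1}(h_1)) \Ape b\bigr)(h_3 \Ape 1_A)$ with $\Delta^2 h = h_1 \otimes h_2 \otimes h_3$, and then collapsing it using the antipode/coassociativity identity $\sum h_2 S^{-1}(h_1) \otimes h_3 = 1_H \otimes h$ (which rests on $\sum x_2 S^{-1}(x_1) = \varepsilon(x)1_H$). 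This is where the bijectivity of $S$ and the symmetric hypothesis are used in an essential way.

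A second small observation I need is that for $a \in \ahr$ and any $Y \in A$, one has $h \Ape (Ya) = (h \Ape Y)\,a$: this follows from (PA1), the defining relation of $\ahr$ applied to the right Sweedler factor, and (PA1) in reverse (using $Y = Y\cdot 1_A$). Granted the key identity and this observation, the conclusion is a four-line chain:
\begin{align*}
a\,(h \Ape 1_A) &= \sum h_2 \Ape \bigl(S^{-1}(h_1) \Ape a\bigr) &&\text{[key identity, $b = a$]}\\
&= \sum h_2 \Ape \bigl((S^{-1}(h_1) \Ape 1_A)\,a\bigr) &&\text{[$a \in \ahr$]}\\
&= \sum \bigl(h_2 \Ape (S^{-1}(h_1) \Ape 1_A)\bigr)\,a &&\text{[second observation]}\\
&= (h \Ape 1_A)\,a &&\text{[key identity, $b = 1_A$].}
\end{align*}

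I expect the main obstacle to be spotting the key identity: it is the ``$S^{-1}$/(PA4)-mirror'' of the manipulation $(h \Ape 1_A)\,a = \sum h_1 \Ape (S(h_2)\Ape a)$ implicit in the preceding Lemma, and both bijectivity of $S$ and the symmetric axiom are needed for it. Once that is in place, the rest is essentially bookkeeping with Sweedler indices.
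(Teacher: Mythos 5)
Your proof is correct and follows essentially the same route as the paper: both hinge on the $S^{-1}$-twisted identity obtained from the symmetric axiom and the bijectivity of $S$ (the paper uses the two-variable form $a(h\Ape b)=h_2\Ape[(S^{-1}(h_1)\Ape a)b]$, of which your key identity is the $b=1_A$ specialization), and the resulting chains of equalities coincide step for step up to whether one pulls $a$ out via your auxiliary observation or reapplies the identity with $a=1_A$. Your write-up actually justifies the key identity (via $\sum h_2S^{-1}(h_1)=\varepsilon(h)1_H$), which the paper only asserts.
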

\begin{proof}
Since the action is symmetric and the antipode is invertible, we have that 
		\[
			a(h\Ape b) = h_2\Ape[(S^{-1}(h_1)\Ape a) b]
		\]
	for all $a,b$ in $A$ and $h$ in $H$. Thus, taking $b=1_A$ and $a\in\ahr$, it follows that
	\begin{align*}
		a( h\Ape 1_A) 
			&= h_2 \Ape [(S^{-1}(h_1) \Ape a)1_A]\\
			&= h_2 \Ape [(S^{-1}(h_1) \Ape 1_A) a]\\
			&= 1_A (h\Ape a)\\
			&= (h\Ape 1_A)a
	\end{align*}
	and so $H \Ape 1_A$ commutes with $\ahr$.
\end{proof}

Our main interest is to work with symmetric partial actions, so we have in particular that $\ah=\ahr$. We may also refer to it as the \emph{fixed part} by the partial action of $H$ on $A$.


\let\e=\varepsilon
\def\0{{}^{\bar0}{}}
\def\1{{}^{\bar1}{}}

Now we recall from \cite{CJ} the definition of partial $H$-comodule algebra. Let $A$ be a $\Bbbk$-algebra and $\bar{\rho}: A \rightarrow A \otimes H$, $\bar{\rho}(a) = a\0 \otimes a\1$, for $a \in A$, a $\Bbbk$-linear map. We call $(A, \bar{\rho})$ a \emph{right partial $H$-comodule algebra} if, for any $a, b \in A$, 
\begin{enumerate}\Not{PCA}
    \item $(ab)\0 \otimes (ab)\1 = a\0b\0 \otimes a\1b\1$, \label{PCA2}
    \item $a\0\0 \otimes a\0\1 \otimes a\1 = a\0 1\0 \otimes a\1{}_1 1\1 \otimes a\1{}_{2}$, \label{PCA3}
    \item $\epsilon (a\1)a\0 = a.$ \label{PCA1}
\end{enumerate}

We say that the right partial $H$-comodule algebra $(A, \bar{\rho})$ is \emph{symmetric} if it satisfies the following additional condition:
\begin{enumerate}\Not[3]{PCA}
    \item $a\0\0 \otimes a\0\1 \otimes a\1 = 1\0 a\0 \otimes 1\1 a\1_1 \otimes a\1_2$.\label{PCA4}
\end{enumerate}

If $(A, \bar{\rho})$ is a right partial $H$-comodule algebra $H$, then we can define the set of coinvariants by the partial coaction. Consider:
\begin{align*}
    \acoh &= \{a \in A \mid \bar{\rho}(a) = a\bar{\rho}(1_A)\} = \left\{ a \in A \;\middle|\; a\0 \otimes a\1 = a 1\0 \otimes 1\1\right\};\\
    \intertext{and}
    \acohr &= \{a \in A \mid \bar{\rho}(a) = \bar{\rho}(1_A)a\} = \left\{ a \in A \;\middle|\; a\0 \otimes a\1 = 1\0 a \otimes 1\1 \right\}.
\end{align*}

We call $\acoh$ the set of \emph{left coinvariants} and $\acohr$ the set of \emph{right coinvariants by the partial coaction of $H$ on $A$}. Is is not difficult to check that $\acoh$ and $\acohr$ are both subalgebras of $A$.

The next result shows that, as well as in the case of partial actions, the symmetry of the partial coaction and the bijectivity of the antipode are sufficient conditions to have the equality between the left coinvariants and the right coinvariants.

\begin{prop}
    If the antipode $S$ is bijective and \((A, \bar\rho)\) is a symmetric right partial \(H\)-comodule algebra,  then \( \acoh = \acohr\).
\end{prop}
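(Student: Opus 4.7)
The plan is to adapt the strategy of the previous proposition to the coaction setting: from the symmetric axiom~\textup{(PCA4)} I would derive an auxiliary identity involving the antipode, together with a mirror identity obtained using $S^{-1}$, and use them to handle the two inclusions $\acoh \subseteq \acohr$ and $\acohr \subseteq \acoh$ separately.

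First I would apply the map $\mathrm{id}_A \otimes (m_H \circ (\mathrm{id}_H \otimes S))$, where $m_H$ denotes the multiplication of $H$, to both sides of the threefold identity in~\textup{(PCA4)}. The left-hand side becomes $a\0\0 \otimes a\0\1 S(a\1)$, while on the right-hand side the antipode relation $a\1{}_1 S(a\1{}_2) = \e(a\1) 1_H$ together with the counit axiom collapse the expression to $1\0 a \otimes 1\1$. Performing the same manipulation on the partial coassociativity axiom~\textup{(PCA3)} (which does not use symmetry) gives another expression for the same left-hand side, yielding the identity
\[
    1\0 a \otimes 1\1 \;=\; a\0 1\0 \otimes a\1{}_1 1\1 S(a\1{}_2), \qquad (\heartsuit)
\]
valid for every $a \in A$.

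To prove $\acoh \subseteq \acohr$, take $a \in \acoh$ and apply $\mathrm{id}\otimes\Delta$ to its defining equation to get $a\0 \otimes a\1{}_1 \otimes a\1{}_2 = a 1\0 \otimes 1\1{}_1 \otimes 1\1{}_2$. Substituting this into the right-hand side of~$(\heartsuit)$ produces an expression that coincides with $a$ multiplied on the left, in the $A$-slot, into the $a = 1_A$ instance of~$(\heartsuit)$ itself, and hence equals $a 1\0 \otimes 1\1$. Consequently $1\0 a \otimes 1\1 = a 1\0 \otimes 1\1 = a\0 \otimes a\1$, so $a \in \acohr$.

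For the reverse inclusion a direct substitution into~$(\heartsuit)$ places $a$ in the wrong position, so I would derive a mirror identity by applying $\mathrm{id}_A \otimes (m_H \circ (S \otimes \mathrm{id}_H))$ to both~\textup{(PCA3)} and~\textup{(PCA4)} and then $\mathrm{id}_A \otimes S^{-1}$ (this is where the bijectivity of $S$ enters), obtaining
\[
    a 1\0 \otimes 1\1 \;=\; 1\0 a\0 \otimes S^{-1}(a\1{}_2) 1\1 a\1{}_1, \qquad (\clubsuit)
\]
for every $a \in A$. A symmetric argument, substituting the defining equation of $\acohr$ into~$(\clubsuit)$ and comparing with the $a = 1_A$ instance of~$(\clubsuit)$ right-multiplied by $a$ in the $A$-slot, then yields $a 1\0 \otimes 1\1 = 1\0 a \otimes 1\1 = a\0 \otimes a\1$, hence $\acohr \subseteq \acoh$. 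The main difficulty I anticipate is the bookkeeping with multiple independent copies of $\bar\rho(1_A)$ when substituting the coinvariance condition into a threefold tensor and recognising the resulting expression as the $a = 1_A$ version of the auxiliary identity multiplied by $a$.
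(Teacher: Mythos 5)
Your proposal is correct and follows essentially the same route as the paper: your identities $(\heartsuit)$ and $(\clubsuit)$ are exactly the chains the paper computes inline (contracting \textup{(PCA3)} and \textup{(PCA4)} against $S$, respectively against $S^{-1}$ after a flip), and the subsequent substitution of the coinvariance condition plus the $a=1_A$ instance reproduces the paper's remaining steps. The only difference is organizational — you isolate the two auxiliary identities before substituting, while the paper runs each inclusion as a single string of equalities — so there is nothing further to add.
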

\begin{proof}
    Let \(a \in \acoh\). Then
    \[
        \begin{aligned}
        \bar{\rho}(1_A)a = 1\0 a \otimes 1\1
            &= 1\0 a\0 \otimes 1\1 \e(a\1)\\
            &= 1\0 a\0 \otimes 1\1 a\1_1 S(a\1_2)\\\text{symmetric p.c.} \to
            &= a\0\0 \otimes a\0\1 S(a\1)\\
            &= (a\0)\0 \otimes (a\0)\1 S(a\1)\\a \in \acoh \to
            &= (a 1\0)\0 \otimes (a 1\0)\1 S(1\1)\\
            &= (a 1' 1\0)\0 \otimes (a 1' 1\0)\1 S(1\1)\\
            &= a\0 1'\0 1\0\0 \otimes a\1 1'\1 1\0\1 S(1\1)\\\text{symmetric p.c.} \to
            &= a\0 1\0 \otimes a\1 1\1_1 S(1\1_2)\\
            &= a\0 1\0 \otimes a\1 \e(1\1)\\
            &= a\0 \otimes a\1 = \bar{\rho}(a)
        \end{aligned}
    \]
    
    Therefore, \(a \in \acohr\). By the other hand, let \(b \in \acohr\), then
    \[
        \begin{aligned}
       b\bar{\rho}(1_A) = b 1\0 \otimes 1\1
            &= b\0 1\0 \otimes \e(b\1) 1\1\\
            &= b\0 1\0 \otimes S^{-1}(b\1_2) b\1_1 1\1\\
            &= b\0\0 \otimes S^{-1}(b\1) b\0\1\\
            &= (b\0)\0 \otimes S^{-1}(b\1) (b\0)\1\\b \in \acohr \to
            &= (1\0 b)\0 \otimes S^{-1}(1\1) (1\0 b)\1\\
            &= (1\0 1' b)\0 \otimes S^{-1}(1\1) (1\0 1' b)\1\\
            &= 1\0\0 1'\0 b\0 \otimes S^{-1}(1\1) 1\0\1 1'\1 b\1\\
            &= 1\0 b\0 \otimes S^{-1}(1\1_2) 1\1_1 b\1\\
            &= 1\0 b\0 \otimes \e(1\1) b\1\\
            &= b\0 \otimes b\1 = \bar{\rho}(b).
        \end{aligned}
    \]
    Therefore, \(b \in \acoh\) and, consequently, \(\acoh = \acohr\).
\end{proof}

\subsection{Integrals}

In this subsection we have the fundamental results about integrals for a good understanding about the work.

An element $t$ in $H$ is called a \emph{left integral} if for any $h \in H$ we have that $ht = \varepsilon(h)t$. A \emph{right integral} in $H$ is defined in a completely analogous way.

\vspace{0,1cm}

\noindent \textbf{Notation:} We will denote by $\int_{l}^H$ the set of left integrals and by $\int_{r}^H$ the set of right integrals in $H$.

\vspace{0,1cm}

We shall relate the integrals in the dual algebra $H^\ast$ of $H$ with the integrals in $H$. So, for the rest of this subsection, suppose that $H$ is finite-dimensional (which implies that $H^\ast$ has a Hopf algebra structure and that the antipode $S$ is bijective). Observe that an integral $T\in \int_{l}^{H^*}$ is such that:
\begin{eqnarray*}
	f \ast T
		&=& \varepsilon_{H^\ast}(f)T\\
		&=& f(1_H)T,
\end{eqnarray*} for all $f \in H^*$, where $\ast$ is the convolution product.

\begin{prop}\label{th1h2}
	Let $T$ be an element in $H^\ast$. Thus:
	\begin{enumerate}[(i)]
		\item if $T \in \int_{r}^{H^*}$, then $T(h_1)h_2 = T(h)1_H$ for all $h \in H$;
		\item if $T \in \int_{l}^{H^*}$, then $h_1T(h_2) = T(h)1_H$ for all $h \in H$.
	\end{enumerate}
\end{prop}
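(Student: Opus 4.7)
The plan is to deduce both identities from the definition of integrals in $H^{\ast}$ by testing against an arbitrary element of $H^{\ast}$ and then invoking finite-dimensionality of $H$ to transfer an equality of scalars (holding for all $f\in H^{\ast}$) into an equality of elements of $H$. This works because, when $H$ is finite dimensional, the canonical evaluation map $H\to H^{\ast\ast}$ is an isomorphism, so two elements of $H$ agree iff every $f\in H^{\ast}$ takes the same value on them.

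For part (i), I would unravel the condition $T\in\int_r^{H^{\ast}}$ using the convolution product on $H^{\ast}$ and the explicit formula $\varepsilon_{H^{\ast}}(f)=f(1_H)$: the identity $T\ast f=f(1_H)T$ evaluated at $h\in H$ reads
\[
T(h_1)f(h_2)=f(1_H)\,T(h), \qquad \text{for all } f\in H^{\ast},\ h\in H.
\]
Applying an arbitrary $f\in H^{\ast}$ to the element $T(h_1)h_2\in H$ (legitimate since $T(h_1)$ is a scalar) gives $f\bigl(T(h_1)h_2\bigr)=T(h_1)f(h_2)=f(1_H)T(h)=f\bigl(T(h)\,1_H\bigr)$. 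Since this holds for every $f$ and $H$ is finite-dimensional, $T(h_1)h_2=T(h)\,1_H$, as required.

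Part (ii) is entirely parallel, starting from $f\ast T=f(1_H)T$, which gives $f(h_1)T(h_2)=f(1_H)T(h)$ for all $f,h$. Evaluating $f$ on $h_1T(h_2)$ yields $f(1_H)T(h)=f(T(h)1_H)$, and the same finite-dimensional duality argument concludes $h_1T(h_2)=T(h)\,1_H$.

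There is no real obstacle here; the only thing to be careful with is to correctly interpret $\varepsilon_{H^{\ast}}$ as evaluation at $1_H$ and to justify the last step by the reflexivity $H\cong H^{\ast\ast}$ that comes from finite-dimensionality (already standing as a hypothesis for this subsection). All Sweedler manipulations are single-use and do not require the antipode.
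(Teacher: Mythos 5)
Your proof is correct and follows essentially the same route as the paper: evaluate an arbitrary $f\in H^{\ast}$ on $T(h_1)h_2$, rewrite via the convolution product and the integral condition, and conclude because $H^{\ast}$ separates points of $H$. The only cosmetic difference is that you justify the last step by reflexivity $H\cong H^{\ast\ast}$ (finite-dimensionality is not actually needed there, since the dual of any vector space over a field separates points), whereas the paper leaves that step implicit.
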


\begin{proof}
	For (i), let $T \in \int_{r}^{H^\ast}$. So for all $h \in H$ and $f \in H^\ast$ we have that 
	\begin{eqnarray*}
	f(T(h_1)h_2)
		&=& T(h_1)f(h_2)\\
		&=& T \ast f (h)\\
		&=& \varepsilon_{H^\ast}(f) T(h)\\
		&=& f(1_H) T(h)\\
		&=& f(T(h)1_H).
	\end{eqnarray*}
	Therefore $T(h_1)h_2 = T(h)1_H$ for all $h \in H$.
	
	The item (ii) is analogous.
\end{proof}

\begin{prop}
	Let $T \in H^\ast$ and $t \in H$ with $T(t) = 1$. Thus:
	\begin{enumerate}[(i)]
		\item if $T \in \int_{r}^{H^\ast}$ and $t \in \int_l^{H}$, then $T(S(h)t_1)t_2 = h$ for all $h \in H$;
		\item if $T \in \int_{l}^{H^\ast}$ and $t \in \int_r^{H}$, then $t_1 T(t_2S(h)) = h$ for all $h \in H$.
	\end{enumerate}
\end{prop}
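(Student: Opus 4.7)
The plan is to prove (i) by applying Proposition~\ref{th1h2}(i) to the element $x = S(h)\,t$ and then substituting the resulting identity along the iterated coproduct of $h$. Since $t$ is a left integral, $S(h)\,t = \varepsilon(S(h))\,t = \varepsilon(h)\,t$, and in particular $T(S(h)\,t) = \varepsilon(h)$. Combined with $\Delta(S(h)\,t) = S(h_2)t_1 \otimes S(h_1)t_2$, Proposition~\ref{th1h2}(i) yields
\[
T(S(h_2)\,t_1)\,S(h_1)\,t_2 \;=\; \varepsilon(h)\,1_H, \qquad (\ast)
\]
valid for every $h \in H$.

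The key step is to feed $(\ast)$ into the middle factor of the iterated coproduct. Writing $\Delta^{(2)}(h) = h_1 \otimes h_2 \otimes h_3$, I would replace $h$ by $h_2$ in $(\ast)$ and multiply on the left by $h_1$; the counit axiom collapses the right-hand side to $h_1\,\varepsilon(h_2)\,1_H = h$, yielding
\[
T(S(h_3)\,t_1)\,h_1\,S(h_2)\,t_2 \;=\; h.
\]
Finally, applying the bilinear map $u \otimes v \mapsto T(S(v)\,t_1)\,u\,t_2$ to the standard antipode identity $h_1\,S(h_2) \otimes h_3 = 1_H \otimes h$ identifies the left-hand side above with $T(S(h)\,t_1)\,t_2$, which proves (i).

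Item (ii) follows by a mirror argument: I would apply Proposition~\ref{th1h2}(ii) to $x = t\,S(h)$, using $t\,S(h) = \varepsilon(h)\,t$ (since $t$ is a right integral), to derive the analogue $t_1\,S(h_2)\,T(t_2\,S(h_1)) = \varepsilon(h)\,1_H$. Substituting this at the first Sweedler factor of $\Delta(h)$ while multiplying on the right by the second factor, and then invoking the dual antipode identity $h_1 \otimes S(h_2)\,h_3 = h \otimes 1_H$ through the analogous bilinear map $u \otimes v \mapsto T(t_2\,S(u))\,t_1\,v$, produces $t_1\,T(t_2\,S(h)) = h$.

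The only obstacle I foresee is purely notational: one must carefully track Sweedler indices when inserting $(\ast)$ at the appropriate factor of $\Delta^{(2)}(h)$, and then recognize the antipode identity at the end. Once the substitution is set up correctly, no further computation is required.
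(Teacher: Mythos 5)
Your argument is correct, and it rests on the same key lemma as the paper (Proposition~\ref{th1h2}), but the execution is genuinely different. The paper computes $S\bigl(T(S(h)t_1)t_2\bigr)$: it applies the identity $T(y_1)y_2 = T(y)1_H$ along the first leg of $S(h)t_1\otimes t_2$, collapses $t_2S(t_3)$ to $\varepsilon(t_2)$, uses the left-integral property of $t$ to arrive at $S(h)$, and then concludes by invoking the bijectivity of the antipode. You instead apply the lemma once, directly to the single element $S(h)t$ (using $S(h)t=\varepsilon(h)t$ and $\Delta(S(h)t)=S(h_2)t_1\otimes S(h_1)t_2$), and then untangle the resulting identity $(\ast)$ by convolving with the identity map and invoking the antipode axiom $h_1S(h_2)=\varepsilon(h)1_H$; all the intermediate steps (the substitution $h\mapsto h_2$, the application of the bilinear map to $h_1S(h_2)\otimes h_3=1_H\otimes h$) are legitimate manipulations of elements of $H\otimes H$. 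Both routes are valid; yours has the mild advantage of not requiring $S$ to be bijective — immaterial here since $H$ is assumed finite-dimensional, but it makes the statement hold for arbitrary Hopf algebras — at the cost of tracking one more Sweedler leg. The mirror argument you sketch for (ii) goes through in exactly the same way.
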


\begin{proof}
	We prove the first statement. The other one is completely analogous. Let $T \in \int_{l}^{H^\ast}$. Therefore, for all $h \in H$, we have that
	\begin{eqnarray*}
	S(T(S(h)t_1)t_2) &=& T(S(h)t_1)S(t_2) \\
	&\overset{\ref{th1h2}}{=}& T(S(h)_1 t_1)S(h)_2 t_2 S(t_3) \\
	&=& T(S(h)_1 t)S(h)_2 \\
	&=& T(\varepsilon(S(h)_1) t)S(h)_2 \\
	&=&T(t) S(h) \\
	&=& S(h).
	\end{eqnarray*}

	Since $S$ is bijective, the result follows.
\end{proof}

\begin{obs}\label{tt1}
	Notice that if $t \in \int_l^{H}$, we have the following isomorphism:
	\begin{eqnarray*}
	H^\ast & \longrightarrow & H\\
	f & \longmapsto & f(t_1) t_2.
	\end{eqnarray*}
	Then, if $T\in \int_r^{H^\ast}$ is nonzero, it follows that $T(t)1_H = T(t_1)t_2 \neq 0$, and consequently $T(t)$ is distinct of zero.
	
	The same occurs if $T \in \int_l^{H^\ast}$, using the isomorphism:
	\begin{eqnarray*}
	H^\ast & \longrightarrow & H\\
	f & \longmapsto & t_1 f(t_2).
	\end{eqnarray*}
	
	Therefore, it is always possible to choose integrals $t$ and $T$ such that $T(t) = 1$. 
\end{obs}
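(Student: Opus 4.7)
The remark has three assertions to verify: that the $\Bbbk$-linear map $\Phi_t\colon H^\ast \to H$ defined by $f\mapsto f(t_1)t_2$ is a bijection whenever $t\in\int_l^H$ is nonzero; the analogous assertion for $\Psi_t\colon f\mapsto t_1 f(t_2)$ and $t\in\int_r^H$; and the resulting non-vanishing $T(t)\neq 0$ for any nonzero integral $T\in H^\ast$, leading to the normalization $T(t)=1$.

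The substantive content is the bijectivity of $\Phi_t$. My plan is to invoke the classical Larson--Sweedler structure theorem for finite-dimensional Hopf algebras: since $\dim H<\infty$, the Fundamental Theorem of Hopf Modules applied to $H^\ast$ as a right $H$-Hopf module yields $\int_l^{H^\ast}\otimes H \cong H^\ast$, from which one extracts that $\dim\int_l^H=\dim\int_r^{H^\ast}=1$ and, dually, that $H$ is a rank-one free module over $H^\ast$ under the convolution action $f\cdot h = f(h_1)h_2$, with any nonzero $t\in\int_l^H$ as generator. The map $f\mapsto f\cdot t$ is exactly $\Phi_t$, so $\Phi_t$ is bijective. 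The map $\Psi_t$ is handled symmetrically using the opposite-handed Hopf module structure.

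With bijectivity established, the rest is immediate. For nonzero $T\in\int_r^{H^\ast}$, Proposition~\ref{th1h2}(i) with $h=t$ gives $\Phi_t(T) = T(t_1)t_2 = T(t)\,1_H$. Injectivity of $\Phi_t$ together with $T\neq 0$ forces $T(t)\,1_H\neq 0$, hence $T(t)\neq 0$; replacing $T$ by $T(t)^{-1}T$, which remains in $\int_r^{H^\ast}$ since that is a $\Bbbk$-subspace, yields $T(t)=1$. The case $T\in\int_l^{H^\ast}$ is entirely symmetric, using $\Psi_t$ and Proposition~\ref{th1h2}(ii).

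The main obstacle is the bijectivity of $\Phi_t$: it genuinely requires the Larson--Sweedler structure theorem (equivalently, the Frobenius property of finite-dimensional Hopf algebras), and a direct elementary computation does not seem to close without that input. A subtle point is that one must \emph{not} attempt to cite Proposition~2.4 here, because the proof of that proposition already presupposes the normalization $T(t)=1$ whose existence is precisely what this remark is establishing; any such appeal would be circular.
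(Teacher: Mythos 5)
Your proposal is correct and follows essentially the same route the paper takes: the remark simply asserts the bijectivity of $f\mapsto f(t_1)t_2$ (the Larson--Sweedler / fundamental theorem of Hopf modules input, available since $H$ is finite-dimensional) and deduces $T(t)\neq 0$ from $T(t_1)t_2=T(t)1_H$ together with injectivity, exactly as you do. One small correction: your closing warning about circularity should point at Proposition 2.5 (whose hypothesis is $T(t)=1$), not at Proposition \ref{th1h2}, which assumes nothing about $T(t)$ and which you correctly invoke; note also that the paper keeps the same $t\in\int_l^H$ for the second map $f\mapsto t_1f(t_2)$, which is still a bijection for a nonzero left integral, so no switch to $\int_r^H$ is needed.
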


\section{Frobenius for partial actions of Hopf algebras}\label{sec:Frobenius}

For the rest of the paper, unless otherwise specified, we fix that the Hopf algebra $H$ is finite-dimensional. Notice that in this case the antipode of $H$ is bijective.

\subsection{Frobenius and partial actions} 
Recall that the smash product $A \# H$ is the tensor product $A \otimes H$ with multiplication rule given by $(a\#h)(b\#g) = a(h_1 \cdot b)\#h_2g$. According to  \cite{CJ}, we set the \emph{partial smash product} $A \Sp H$ as the subalgebra generated by the elements of the form $(a\#h)(1_A \# 1_H) = a(h_1 \cdot 1_A)\#h_2$.

The purpose of this subsection is to prove that if $A$ is a symmetric partial $H$-module algebra, then $A \Sp H / A$ is Frobenius. We start with the definition of a Frobenius ring extension $S / R$.

\begin{dfn}\label{frobenius}
	Given $\imath\colon R \to S$ a ring homomorphism, we say that $\imath$ is \emph{Frobenius} (or that the ring extension $S / R$ is Frobenius) if there exists a pair $(\Phi,e)$, called \emph{Frobenius system}, such that $\Phi\colon S \to R$ is an $R$-bimodule homomorphism and $e = \sum\limits_{i} x_i \otimes y_i \in S \otimes_R S$ satisfies $es = se$, for all $s \in S$, and, $\sum\limits_{i}\Phi(x_i)y_i = \sum\limits_{i} x_i\Phi(y_i) = 1_S$.
\end{dfn}


\begin{ex}
	Let $t \in \int_l^{H}$ and $T\in \int_r^{H^\ast}$ be such that $T(t) = 1$. Then $H / \Bbbk$ is Frobenius with $\Phi = T$ and $e = t_2 \otimes S^{-1}(t_1)$.
\end{ex}

Assuming that $A$ is a partial $H$-module algebra, $h \Ape 1_A$ is central in $A$ and
\[
	t_1 \otimes t_2 \otimes t_3 \otimes t_4  = t_1 \otimes t_3 \otimes t_2 \otimes t_4,
\] S. Caenepeel e K. Janssen showed in \cite{CJ} that $A \Sp H / A$ is Frobenius.  

Here we deal with a different case. Supposing only that $A$ is a symmetric partial $H$-module algebra (that is, without restrictions on the co-product of the integral $t$, neither on the centrality of $h \Ape 1_A$) we will prove that $A \Sp H / A$ is Frobenius. So from now on we assume that $A$ is a left symmetric partial $H$-module algebra.

Before we go any further, we need to make a brief observation about the definition of integrals, that will be useful in the construction of this section.

\begin{obs}
	By definition, $t$ is a left integral in $H$, if for any $h \in H$ we have that $h t = \varepsilon(h) t$. However, equivalently to this definition, we have that $t$ is a left integral in $H$ if for any $h \in H$ we have that
	\begin{eqnarray}\label{defequivint}
	t_1 \otimes h t_2 = S(h) t_1 \otimes t_2
	\end{eqnarray}
	 and 
	\begin{eqnarray}\label{defequivintHdimfinita}
	S^{-1}(t_1) \otimes h t_2 = S^{-1}(t_1) h \otimes t_2.
	\end{eqnarray}
	
\end{obs}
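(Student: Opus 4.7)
The plan is to show that the integral condition $ht=\varepsilon(h)t$, the identity \eqref{defequivint}, and the identity \eqref{defequivintHdimfinita} are mutually equivalent, exploiting that $H$ being finite-dimensional forces $S$ to be bijective. Concretely, I would prove the chain
\[
    ht=\varepsilon(h)t \;\Longrightarrow\; \eqref{defequivint} \;\Longleftrightarrow\; \eqref{defequivintHdimfinita} \;\Longrightarrow\; ht=\varepsilon(h)t.
\]

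For the first implication, start from $ht=\varepsilon(h)t$ and apply the coproduct to obtain $h_1t_1\otimes h_2t_2=\varepsilon(h)\,t_1\otimes t_2$. Then I would insert the antipode identity $S(h_1)h_2\otimes h_3 = 1_H\otimes h$ on the left to rewrite
\[
    t_1\otimes h t_2 \;=\; S(h_1)h_2 t_1\otimes h_3 t_2 \;=\; S(h_1)\bigl(h_2 t_1\otimes h_3 t_2\bigr) \;=\; S(h_1)\varepsilon(h_2)\,t_1\otimes t_2 \;=\; S(h)\,t_1\otimes t_2,
\]
which is exactly \eqref{defequivint}. The equivalence of \eqref{defequivint} and \eqref{defequivintHdimfinita} follows by applying $S^{-1}\otimes\mathrm{id}$ (resp. $S\otimes\mathrm{id}$) and using that $S^{-1}$ is an antihomomorphism: from $t_1\otimes ht_2=S(h)t_1\otimes t_2$ we get $S^{-1}(t_1)\otimes ht_2=S^{-1}(S(h)t_1)\otimes t_2=S^{-1}(t_1)h\otimes t_2$, and the reverse direction is symmetric.

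Finally, to recover the integral condition from \eqref{defequivint}, apply $\mathrm{id}\otimes\varepsilon$ on both sides: the left hand side collapses to $\varepsilon(h)\,t_1\varepsilon(t_2)=\varepsilon(h)t$, while the right hand side gives $S(h)\,t_1\varepsilon(t_2)=S(h)t$. So $\varepsilon(h)t=S(h)t$ for all $h\in H$, and replacing $h$ by $S^{-1}(h)$ (using bijectivity of $S$ together with $\varepsilon\circ S^{-1}=\varepsilon$) yields $ht=\varepsilon(h)t$.

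The argument is essentially bookkeeping of Sweedler indices and two standard antipode identities, so there is no real obstacle; the only subtle point is that both reverse implications genuinely rely on $S$ being invertible, which is why the statement is placed immediately after the standing assumption that $H$ is finite-dimensional.
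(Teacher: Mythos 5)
Your proposal is correct: the paper states this remark without any proof, treating it as a routine observation, and your chain $ht=\varepsilon(h)t \Rightarrow \eqref{defequivint} \Leftrightarrow \eqref{defequivintHdimfinita} \Rightarrow ht=\varepsilon(h)t$ is exactly the standard argument that fills it in (the key step $t_1\otimes ht_2 = S(h_1)h_2t_1\otimes h_3t_2 = S(h)t_1\otimes t_2$, passing between \eqref{defequivint} and \eqref{defequivintHdimfinita} with $S^{\pm1}\otimes\mathrm{id}$, and recovering the integral condition with $\mathrm{id}\otimes\varepsilon$). Your observation that the reverse implications need $S$ bijective, guaranteed by the standing finite-dimensionality assumption, is also the right thing to flag.
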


We observed in Remark \ref{tt1}, there exist a left integral $t$ in $H$ and a right integral $T$ in $H^\ast$ such that $T(t) = 1$. Thus from now on the integrals $(t,T)$ have this property.

Our main goal of this subsection is to show that $A \Sp H / A$ is Frobenius. We will proceed constructively. We first construct the element $e$ in $(A \Sp H) \otimes_A (A \Sp H)$  and then we define the map $\Phi\colon A\Sp H \to A$.

Remember that the map
\begin{eqnarray*}
\imath:A &\to& A \Sp H\\
a & \mapsto& a\Sp 1
\end{eqnarray*}
is a monomorphism of unital algebras. Therefore we can identify the elements of $A$ as elements of $A \Sp H$. Then, from now on, to avoid excessive formalism, we treat an element of $A$ as the form $a \Sp 1$. Thus, $A \Sp H$ becomes an $A$-bimodule via multiplication.

For what follows, we set the following element $$e = (1 \Sp t_2)\otimes_A (1 \Sp S^{-1}(t_1)) \in (A \Sp H) \otimes_A (A \Sp H).$$

\begin{prop}\label{e1e2r=re1e2}
	For any element $a \Sp h$ in $A \Sp H$ we have that $e(a \Sp h) = (a \Sp h)e$, that is:
	\[
		(1 \Sp t_2)\otimes_A [(1 \Sp S^{-1}(t_1))(a \Sp h)] = [(a \Sp h)(1 \Sp t_2)] \otimes_A (1 \Sp S^{-1}(t_1)).
	\]
\end{prop}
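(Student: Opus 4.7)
The plan is to decompose $(a \Sp h) = (a \Sp 1_H)(1_A \Sp h)$ and check that $e$ commutes separately with each factor. Once both $e \cdot (1_A \Sp h) = (1_A \Sp h) \cdot e$ and $e \cdot (a \Sp 1_H) = (a \Sp 1_H) \cdot e$ are known, one concatenates:
\[
(a \Sp h) \cdot e = (a \Sp 1_H)(1_A \Sp h) \cdot e = (a \Sp 1_H) \cdot e \cdot (1_A \Sp h) = e \cdot (a \Sp h).
\]
The three tools I will use throughout are (i) the partial smash product multiplication $(c \Sp k)(b \Sp g) = c(k_{(1)} \Ape b) \Sp k_{(2)} g$, (ii) the balancing relations of $\otimes_A$ (moving $A$-elements between the two slots via $(b \Sp 1_H)(1_A \Sp k) = b \Sp k$), and (iii) the characterizations of left integrals in equations (\ref{defequivint}) and (\ref{defequivintHdimfinita}).

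For the commutation with $1_A \Sp h$: applying the multiplication formula on each side produces an expression whose equality, after rewriting the extra $(k \Ape 1_A)$ factors via (PA1)-(PA3), reduces to the identity
\[
S^{-1}(t_{(1)}) \otimes h t_{(2)} = S^{-1}(t_{(1)}) h \otimes t_{(2)},
\]
i.e.\ equation (\ref{defequivintHdimfinita}), transported under the natural map $H \otimes H \to (A \Sp H) \otimes_A (A \Sp H)$ sending $x \otimes y \mapsto (1_A \Sp y) \otimes_A (1_A \Sp S^{-1}(x))$. This step does not require the symmetry assumption.

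For the commutation with $a \in A$ (identified with $a \Sp 1_H$): this is where the symmetry of the partial action is essential. I expand
\[
e \cdot a = (1 \Sp t_{(2)}) \otimes_A (S^{-1}(t_{(1)})_{(1)} \Ape a) \Sp S^{-1}(t_{(1)})_{(2)},
\]
rewrite the second factor as $[(S^{-1}(t_{(1)})_{(1)} \Ape a) \Sp 1_H] \cdot (1_A \Sp S^{-1}(t_{(1)})_{(2)})$ and pull the $A$-element $(S^{-1}(t_{(1)})_{(1)} \Ape a)$ across the tensor. This produces a nested action $t_{(2),(1)} \Ape (S^{-1}(t_{(1)})_{(1)} \Ape a)$ on the left factor, which I rearrange using (PA4) into the form $\bigl((t_{(2),(1),(1)} S^{-1}(t_{(1)})_{(1)}) \Ape a\bigr)(t_{(2),(1),(2)} \Ape 1_A)$. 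After reindexing Sweedler factors by coassociativity, the product $t_{(2),(1),(1)} S^{-1}(t_{(1)})_{(1)}$ collapses via $S^{-1}(x_{(1)}) x_{(2)} = \varepsilon(x) 1_H$, and the resulting expression matches $a \cdot e = (a \Sp t_{(2)}) \otimes_A (1 \Sp S^{-1}(t_{(1)}))$ after using the identity (\ref{defequivint}) once more.

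The main obstacle lies in this last commutation. Without the symmetry axiom one only has (PA3), which rearranges a nested action in the opposite direction and does not afford the required antipode collapse; (PA4) is precisely the dual rewriting needed so that the inner factor $a$ can be extracted and the Sweedler sum over $t$ simplified. Bookkeeping the iterated coproducts of $t$ (up to three levels) and of $h$ (up to two levels) is tedious but routine once the correct combination of axioms has been identified.
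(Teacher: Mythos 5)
Your overall strategy is workable and is essentially the paper's computation reorganized: the paper does not factor $a\Sp h=(a\Sp 1_H)(1_A\Sp h)$ but runs one direct calculation on $(1\Sp S^{-1}(t_1))(a(h_1\Ape 1_A)\Sp h_2)$, using exactly the ingredients you list — pulling the $A$-component across $\otimes_A$, collapsing $t_3S^{-1}(t_2)$ via the symmetry axiom (PA4), and finishing with \eqref{defequivintHdimfinita}. Your treatment of the commutation with $a\Sp 1_H$ is correct (in fact no further appeal to \eqref{defequivint} is needed at the end: after the $\varepsilon$-collapse you land directly on $(a\Sp t_2)\otimes_A(1\Sp S^{-1}(t_1))$, which is $a\cdot e$ on the nose).

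There is, however, one concrete error: the claim that the commutation with $1_A\Sp h$ ``does not require the symmetry assumption'' and can be settled with (PA1)--(PA3) alone. In the partial smash product $1_A\Sp h=(h_1\Ape 1_A)\#h_2$ is not scalar in the $A$-slot, so $(1\Sp S^{-1}(t_1))(1_A\Sp h)$ carries the $A$-factor $S^{-1}(t_2)\Ape(h_1\Ape 1_A)$; once this is pulled across $\otimes_A$ and absorbed into the left slot you face the nested action $t_3\Ape\bigl(S^{-1}(t_2)\Ape(h_1\Ape 1_A)\bigr)$ — precisely the configuration you correctly flag as the crux in the $a$-part. Applying (PA3) here gives $(t_3\Ape 1_A)\bigl((t_4S^{-1}(t_2))\Ape(h_1\Ape 1_A)\bigr)$, whose antipode legs $t_2$ and $t_4$ are not adjacent and do not cancel; only (PA4) produces $\bigl((t_3S^{-1}(t_2))\Ape(h_1\Ape 1_A)\bigr)(t_4\Ape 1_A)$ and the $\varepsilon$-collapse. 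So this half of your argument stalls as described and is rescued only because the symmetry hypothesis is available and must be invoked here too (it is genuinely dispensable only in the global case, where $1_A\Sp h=1_A\# h$). The paper sidesteps the issue by never splitting the element, applying (PA4) once to the combined factor $S^{-1}(t_2)\Ape\bigl(a(h_1\Ape 1_A)\bigr)$.
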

\begin{proof}
	Take $a \Sp h$ in $A \Sp H$. Then:
	\begin{eqnarray*}
		1\Sp t_2 \otimes_A [(1 \Sp S^{-1}(t_1))(a \Sp h)]
		    &=& 1\Sp t_2 \otimes_A (1 \Sp S^{-1}(t_1))(a (h_1 \Ape 1) \Sp h_2)\\
			&=& 1\Sp t_3 \otimes_A S^{-1}(t_2) \Ape a (h_1 \Ape 1) \Sp S^{-1}(t_1)h_2\\
			&=& 1\Sp t_3 \otimes_A [S^{-1}(t_2) \Ape a (h_1 \Ape 1) \Sp 1][1 \Sp S^{-1}(t_1)h_2]\\
			&=& (1\Sp t_3)[S^{-1}(t_2) \Ape a (h_1 \Ape 1) \Sp 1] \otimes_A 1 \Sp S^{-1}(t_1)h_2\\
			&=& t_3 \Ape S^{-1}(t_2) \Ape a (h_1 \Ape 1) \Sp t_4 \otimes_A 1 \Sp S^{-1}(t_1)h_2\\
		{\rm symmetric~ p.a.}\rightarrow
			&=& [t_3 S^{-1}(t_2) \Ape a (h_1 \Ape 1)](t_4 \Ape 1) \Sp t_5 \otimes_A 1 \Sp S^{-1}(t_1)h_2\\
			&=& [\varepsilon(t_2) a (h_1 \Ape 1)](t_3 \Ape 1) \Sp t_5 \otimes_A 1 \Sp S^{-1}(t_1)h_2\\
			&=& a (h_1 \Ape 1)(t_2 \Ape 1) \Sp t_3 \otimes_A 1 \Sp S^{-1}(t_1)h_2\\
			& = & [(a (h_1 \Ape 1)\Sp t_2)(1 \Sp 1) ]\otimes_A 1 \Sp S^{-1}(t_1)h_2\\
			& = & a (h_1 \Ape 1)\Sp t_2 \otimes_A 1 \Sp S^{-1}(t_1)h_2\\
			&\overset{(\ref{defequivintHdimfinita})}{=} & a (h_1 \Ape 1) \Sp h_2 ~ t_2 \otimes_A 1 \Sp S^{-1}(t_1)\\
			& = & (a \Sp h)(1 \Sp t_2) \otimes_A 1 \Sp S^{-1}(t_1)
	\end{eqnarray*}
	as we wanted.
\end{proof}

Now that we have the element $e$, we need to construct the map $\Phi$ such that we have a Frobenius system.

Define
\begin{eqnarray*}
\Phi\colon A \Sp H & \longrightarrow & A\\
a \Sp h & \longmapsto & a T(h)
\end{eqnarray*}
as a candidate to our map.

\begin{prop}\label{PhiAbil}
	$\Phi$ is an $A$-bimodule homomorphism.
\end{prop}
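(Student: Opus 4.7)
The proof reduces to two bimodule-linearity checks; the only substantive ingredient is the integral identity of Proposition \ref{th1h2}. I will view $\Phi$ as the restriction to $A \Sp H$ of the $\Bbbk$-linear map $A \# H \to A$ sending $a \# h \mapsto a T(h)$, which disposes of any well-definedness worry at the outset. Consistency of this restriction with the stated formula on a canonical generator $a(h_1 \Ape 1) \# h_2 \in A \Sp H$ amounts to the identity $(h_1 \Ape 1) T(h_2) = T(h) 1_A$, which follows from Proposition \ref{th1h2} together with axioms (PA1)--(PA2).

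Left $A$-linearity is bookkeeping: for $b \in A$ the smash-product multiplication gives $(b \Sp 1)(a \Sp h) = ba \Sp h$, whence
\[
\Phi((b \Sp 1)(a \Sp h)) = ba\,T(h) = b\,\Phi(a \Sp h).
\]

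Right $A$-linearity is where the integral enters. From $(a \Sp h)(b \Sp 1) = a(h_1 \Ape b) \Sp h_2$ one reads $\Phi((a \Sp h)(b \Sp 1)) = a(h_1 \Ape b) T(h_2)$. Because $T(h_2)$ is a scalar and the partial action is $\Bbbk$-linear in its first argument, this rewrites as $a\bigl[(T(h_2) h_1) \Ape b\bigr]$. Proposition \ref{th1h2} replaces $T(h_2) h_1$ by $T(h) 1_H$, and axiom (PA2) then collapses the expression to $a T(h)(1_H \Ape b) = a T(h) b = \Phi(a \Sp h) b$.

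The main point requiring care is aligning the Sweedler indices produced by the smash multiplication with those in the integral identity; once the correspondence is set up, the verification is a short linear manipulation, and in particular does not appeal to the symmetric-partial-action axiom (PA4)—that hypothesis will be needed for the centralising property of $e$ but not here.
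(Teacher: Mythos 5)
Your proof is correct and coincides with the paper's own argument: left $A$-linearity is immediate from $(b\Sp 1)(a\Sp h)=ba\Sp h$, and right $A$-linearity is obtained by pushing the scalar $T(h_2)$ into the action, invoking the integral identity $h_1T(h_2)=T(h)1_H$ of Proposition \ref{th1h2}, and collapsing via $1_H\Ape b=b$ — exactly the paper's computation, with your added (harmless) remarks on well-definedness and on the symmetric axiom not being needed. The only caveat, which you inherit from the paper rather than introduce, is that the identity used is Proposition \ref{th1h2}(ii), stated for $T\in\int_l^{H^\ast}$, whereas $T$ has been fixed as a right integral in $H^\ast$; neither your argument nor the paper's addresses this left/right convention mismatch.
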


\begin{proof}
	Let $b \in A$ and $a \Sp h \in A \Sp H$. Notice that we can see $b$ as $b \Sp 1$, which give us the $A$-bimodule structure of $A \Sp H$. Therefore,
	\begin{eqnarray*}
	\Phi((b \Sp 1)(a \Sp h)) & = & \Phi(ba \Sp h) \\
	& = & ba T(h)\\
	& = & b \Phi(a \Sp h)
	\end{eqnarray*} 
	which means that $\Phi$ is left $A$-linear.
	
	Furthermore,
	\begin{eqnarray*}
	\Phi((a \Sp h)(b \Sp 1)) & = & \Phi(a(h_1 \Ape b) \Sp h_2) \\
	& = & a(h_1 \Ape b) T(h_2)\\
	& = & a(h_1T(h_2) \Ape b)\\
	& = & a(T(h)1_H \Ape b)\\
	& = & a(1_H \Ape b) T(h)\\
	& = & abT(h)\\
	& = & aT(h) b\\
	& = & \Phi(a \Sp h) b
	\end{eqnarray*} 
	and then $\Phi$ is right $A$-linear.
\end{proof}

It only remains to show the relation between $\Phi$ and $e$ described in Definition \ref{frobenius}, that is, that the following proposition is valid.

\begin{prop}\label{Phiee}
	Following the same notations, we have:
	\begin{enumerate}[(i)]
		\item $\Phi(1_A \Sp t_2)  (1_A \Sp S^{-1}(t_1)) = 1_A \Sp 1_H$;\label{Phiee1}\\
		\item $(1_A \Sp t_2) \Phi(1_A \Sp S^{-1}(t_1)) = 1_A \Sp 1_H$.\label{Phiee2}
	\end{enumerate}
\end{prop}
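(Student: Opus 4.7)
My plan is to reduce both (i) and (ii) to identities on the integral $t$ in $H$ by exploiting the $A$-bimodule structure of $A\Sp H$, and then to prove those identities using Proposition~\ref{th1h2} and the antipode.

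Since $\Phi(a\Sp h)=aT(h)\in A$ ($T(h)$ being a scalar), and since the inclusion $a\mapsto a\Sp 1_H$ satisfies $(a\Sp 1_H)(b\Sp h)=ab\Sp h$, the first product becomes
\[
\Phi(1_A\Sp t_2)(1_A\Sp S^{-1}(t_1)) \;=\; T(t_2)\cdot(1_A\Sp S^{-1}(t_1)) \;=\; 1_A\Sp\bigl(T(t_2)\,S^{-1}(t_1)\bigr),
\]
so identity \eqref{Phiee1} reduces to showing $T(t_2)\,S^{-1}(t_1)=1_H$ in $H$. An analogous computation using the right $A$-action reduces \eqref{Phiee2} to $T(S^{-1}(t_1))\,t_2=1_H$.

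For \eqref{Phiee1}, I would apply Proposition~\ref{th1h2} at $h=t$ (using $T(t)=1$) to obtain $t_1T(t_2)=T(t)\,1_H=1_H$, and then apply the antipode $S^{-1}$ to both sides. Since $S^{-1}$ is an algebra anti-homomorphism fixing $1_H$, this yields $T(t_2)\,S^{-1}(t_1)=S^{-1}(1_H)=1_H$, as required.

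For \eqref{Phiee2}, the strategy is more subtle: observe that the composition $T\circ S^{-1}\colon H\to\Bbbk$ also satisfies an integral identity in $H^\ast$ (obtained from the integral property of $T$ by replacing $f$ with $f\circ S$ and using that $S^{-1}\Delta=(S^{-1}\otimes S^{-1})\Delta^{\mathrm{op}}$). Applying Proposition~\ref{th1h2} at $h=t$ to this composed integral then gives $T(S^{-1}(t_1))\,t_2=T(S^{-1}(t))\,1_H$, and combining the normalization $T(t)=1$ with Remark~\ref{tt1} (applied to the right integral $S^{-1}(t)\in\int_{r}^H$ paired with $T$) provides $T(S^{-1}(t))=1$. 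The main obstacle is precisely this last normalization step: one must verify that the pairing $T(S^{-1}(t))$ collapses to $T(t)$ under the chosen normalization, via the duality between left integrals in $H$ and right integrals transferred through the antipode.
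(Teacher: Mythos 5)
Your argument for item \eqref{Phiee1} is, step for step, the paper's own proof: reduce to the identity $T(t_2)\,S^{-1}(t_1)=1_H$ inside $H$, obtain $t_1T(t_2)=T(t)1_H=1_H$ from Proposition~\ref{th1h2}, and push it through the anti-homomorphism $S^{-1}$. Nothing to add there, beyond an observation that applies equally to the paper: the identity $h_1T(h_2)=T(h)1_H$ is the one Proposition~\ref{th1h2} states for \emph{left} integrals in $H^\ast$, whereas $T$ was fixed as a right integral, so whichever chirality is ultimately intended, your step is no worse than the source.

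For item \eqref{Phiee2} you part ways with the paper, and your route has a genuine gap. The paper computes nothing here: it reduces to $t_2\,T(S^{-1}(t_1))=1_H$ and quotes this identity from the Example asserting that $H/\Bbbk$ is Frobenius with system $(T,\,t_2\otimes S^{-1}(t_1))$ --- the identity is one of the two normalizations of that Frobenius system. You instead try to rederive it: apply Proposition~\ref{th1h2} to $T\circ S^{-1}$ (which is indeed an integral in $H^\ast$ of the opposite chirality to $T$, so with the correct left/right bookkeeping one gets $T(S^{-1}(t_1))\,t_2=T(S^{-1}(t))\,1_H$), and then you need $T(S^{-1}(t))=1$. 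That last normalization is precisely the nontrivial content, and you do not prove it; you only announce that it ``must be verified.'' It is not a formal consequence of $T(t)=1$: the element $S^{-1}(t)$ is a right integral of $H$, which in a non-unimodular $H$ is not proportional to $t$, so $T(S^{-1}(t))$ is a genuinely new scalar. The delicacy is concrete: already in Sweedler's four-dimensional Hopf algebra the analogous scalar $T(S(t))$ equals $-T(t)$, so the direction of the antipode matters and the hoped-for ``collapse'' cannot be obtained by general duality nonsense --- it needs an actual argument (essentially the nontrivial half of the Frobenius property of $H/\Bbbk$ itself). To close your proof of \eqref{Phiee2} you must either supply a proof that $T(S^{-1}(t))=1$, or do what the paper does and invoke the identity $t_2\,T(S^{-1}(t_1))=1_H$ recorded in the Example.
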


\begin{proof}
	For \eqref{Phiee1}, we have that: 
	\begin{eqnarray*}
	\Phi(1_A \Sp t_2)  (1_A \Sp S^{-1}(t_1))
	& = & T(t_2) (1_A \Sp S^{-1}(t_1))\\
	& = & 1_A \Sp S^{-1}(t_1 T(t_2))\\
	& = & 1_A \Sp S^{-1}(T(t)1_H)\\
	& = & 1_A \Sp S^{-1}(1_H)\\
	& = & 1_A \Sp 1_H.
	\end{eqnarray*}

	For \eqref{Phiee2}, remember that $H / \Bbbk$ is Frobenius with $\Phi = T$ and $e = t_2 \otimes S^{-1}(t_1)$. In particular, it follows that $t_2 T(S^{-1}(t_1)) = 1_H$. Then:
	\begin{eqnarray*}
	(1_A \Sp t_2) \Phi(1_A \Sp S^{-1}(t_1))
	& = & (1_A \Sp t_2) T(S^{-1}(t_1))\\
	& = & 1_A \Sp t_2T(S^{-1}(t_1))\\
	& = & 1_A \Sp 1_H.
	\end{eqnarray*}
\end{proof}

\begin{obs}
	Note that the statements in the above proposition can be rewritten as: 
	\begin{description}
		\item[(\ref{Phiee1})] $(\Phi \otimes_A I)(e) = 1_A \otimes_A 1_A \Sp 1_H$;\\
		\item[(\ref{Phiee2})] $(I \otimes_A \Phi)(e) = 1_A \Sp 1_H \otimes_A 1_A$.
	\end{description}
\end{obs}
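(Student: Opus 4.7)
The assertion is that the two identities of Proposition \ref{Phiee} can be restated as equations in $A \otimes_A (A \Sp H)$ and $(A \Sp H) \otimes_A A$ respectively. The plan is to apply the maps $\Phi \otimes_A I$ and $I \otimes_A \Phi$ directly to $e = (1_A \Sp t_2) \otimes_A (1_A \Sp S^{-1}(t_1))$, and then invoke the $A$-balance of the tensor product to slide the output of $\Phi$ — which lives in $A$ — across the tensor symbol.

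For identity (\ref{Phiee1}), I would first observe that $\Phi$ is $A$-bilinear by Proposition \ref{PhiAbil}, so $\Phi \otimes_A I$ is a well-defined map $(A \Sp H) \otimes_A (A \Sp H) \to A \otimes_A (A \Sp H)$. Applying it to $e$ gives $\Phi(1_A \Sp t_2) \otimes_A (1_A \Sp S^{-1}(t_1))$. Since $\Phi(1_A \Sp t_2) = T(t_2)\,1_A$ lies in $A$, the balance relation for $\otimes_A$ yields
\[
 \Phi(1_A \Sp t_2) \otimes_A (1_A \Sp S^{-1}(t_1))
   \;=\; 1_A \otimes_A \bigl[\Phi(1_A \Sp t_2)\,(1_A \Sp S^{-1}(t_1))\bigr],
\]
where the bracketed product is computed in the $A$-bimodule $A \Sp H$ using the unital inclusion $\imath \colon A \hookrightarrow A \Sp H$. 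Proposition \ref{Phiee}(\ref{Phiee1}) identifies the bracket with $1_A \Sp 1_H$, which gives the desired $(\Phi \otimes_A I)(e) = 1_A \otimes_A (1_A \Sp 1_H)$.

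Identity (\ref{Phiee2}) is handled symmetrically: the map $I \otimes_A \Phi$ sends $e$ to $(1_A \Sp t_2) \otimes_A \Phi(1_A \Sp S^{-1}(t_1))$; since $\Phi(1_A \Sp S^{-1}(t_1)) \in A$, the $A$-balance rewrites this as $\bigl[(1_A \Sp t_2)\,\Phi(1_A \Sp S^{-1}(t_1))\bigr] \otimes_A 1_A$, and Proposition \ref{Phiee}(\ref{Phiee2}) collapses the left factor to $1_A \Sp 1_H$.

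There is essentially no obstacle here — this is a bookkeeping remark repackaging the two identities of Proposition \ref{Phiee} in the tensor form that will be convenient for verifying the Frobenius-system axioms of Definition \ref{frobenius}. The only point that warrants care is that the sliding step genuinely requires $\Phi$ to take values in $A$ (not merely in $A \Sp H$); it is precisely the $A$-bilinearity established in Proposition \ref{PhiAbil} together with this codomain that legitimizes the manipulation, and Proposition \ref{e1e2r=re1e2} guarantees that $e$ is a well-defined element of $(A \Sp H) \otimes_A (A \Sp H)$ in the first place.
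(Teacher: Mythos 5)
Your argument is correct and is exactly the (implicit) justification the paper intends: the remark is stated without proof precisely because it amounts to applying $\Phi\otimes_A I$ and $I\otimes_A\Phi$ to $e$, sliding the resulting element of $A$ across $\otimes_A$ by the balance relation, and invoking Proposition \ref{Phiee}. Your added care about $\Phi$ landing in $A$ and about $e$ being well defined in $(A\Sp H)\otimes_A(A\Sp H)$ is sound and consistent with the paper's setup.
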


So we have the following:

\begin{teo}
	Let $A$ be left symmetric partial H-module algebra. Then $A\Sp H / A$ is Frobenius with Frobenius system $(\Phi,e)$ given by:
	\begin{eqnarray*}
	\Phi\colon A \Sp H & \longrightarrow & A\\
	a \Sp h & \longmapsto & a T(h)
	\end{eqnarray*}
	and
	\begin{equation*}
	e = (1 \Sp t_2)\otimes_A (1 \Sp S^{-1}(t_1)).
	\end{equation*}
\end{teo}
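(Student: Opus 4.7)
The plan is straightforward, since the three preceding propositions have already verified every individual axiom of \Cref{frobenius} for the pair $(\Phi, e)$; the theorem is essentially an assembly statement. First I would recall the definition: to prove that $A \Sp H / A$ is Frobenius with system $(\Phi, e)$, I must check (a) that $\Phi \colon A \Sp H \to A$ is an $A$-bimodule homomorphism, (b) that $e \in (A \Sp H) \otimes_A (A \Sp H)$ centralizes every element of $A \Sp H$, and (c) the two trace-type identities $(\Phi \otimes_A I)(e) = 1_{A \Sp H}$ and $(I \otimes_A \Phi)(e) = 1_{A \Sp H}$.

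Next I would match each of these three requirements to the corresponding result already established. Condition (a) is exactly the content of \Cref{PhiAbil}, where $A$-linearity on both sides was verified using the fact that $h_1 T(h_2) = T(h) 1_H$ (which comes from $T \in \int_r^{H^*}$ via \Cref{th1h2}) and then axiom (PA2). Condition (b) is precisely \Cref{e1e2r=re1e2}, whose proof used the symmetric partial action axiom together with the integral identity \eqref{defequivintHdimfinita}. Finally, condition (c) splits into the two equalities (i) and (ii) of \Cref{Phiee}, which were checked using $T(t) = 1$ and the fact that $H/\Bbbk$ is Frobenius with system $(T, t_2 \otimes S^{-1}(t_1))$.

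Since there is no further obstacle, the proof is simply to observe that assembling \Cref{PhiAbil}, \Cref{e1e2r=re1e2}, and \Cref{Phiee} verifies all the clauses of \Cref{frobenius}, so $(\Phi, e)$ is a Frobenius system for the extension $A \Sp H / A$. The only place where one might want to be careful is to remark that the choice of integrals $t \in \int_l^H$ and $T \in \int_r^{H^*}$ with $T(t) = 1$ is legitimate, which was justified in \Cref{tt1}. Everything else is bookkeeping.
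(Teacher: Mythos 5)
Your proposal is correct and follows exactly the paper's route: the paper gives no separate argument for this theorem, since it is stated as the culmination of the preceding constructive development, with \Cref{PhiAbil}, \Cref{e1e2r=re1e2}, and \Cref{Phiee} supplying precisely the three clauses of \Cref{frobenius} that you list. Your added remark on the legitimacy of choosing $t$ and $T$ with $T(t)=1$ via \Cref{tt1} matches the paper's own standing convention.
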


\subsection{Application of the Frobenius Theory}

In the case of global actions of Hopf algebras, given $A$ a left $H$-module algebra (which is always symmetric), we have that $A \# H$ is a left $H$-module via the immersion of $H$ in $A \# H$ as algebra, that is, we have an action $\triangleright\colon H \otimes (A \# H) \to A\# H$ given by $g \triangleright (a \# h) = (1_A \# g)(a \# h)$.

Furthermore, the fixed part by this action is given by
\begin{eqnarray*}
(A \# H)^H & = & \{a \# h \in A \# H | g \triangleright (a \# h) = (g \Ape 1_A)a \# h\}\\
& = & \{a \# h \in A \# H | (1_A \# g)(a \# h) = (g \Ape 1_A)a \# h\}.
\end{eqnarray*}

By \cite{L}*{Lemma~4.3} we can characterize this fixed part in terms of left integrals as we describe below. 

\begin{equation}
\left(1_A \# \int_{l}^H\right)(A \# 1_H) = (A \# H)^H.
\end{equation} 

This equality is still valid, \emph{mutatis mutantis}, in the partial case. However, a turning around Frobenius is necessary. 

Notice that a first problem is that $H$ not necessarily immerses in $A \Sp H$ as algebra, implying that $A \Sp H$ do not become a $H$-module. Nevertheless, we can take the $\Bbbk$-vector space:
\begin{eqnarray*}
\ashpp & = & \{a \# h \in A \# H \mid (1_A \# g)(a \# h) = (g \Ape 1_A)a \# h\}.
\end{eqnarray*}

We shall show that:
\begin{equation}
\left( 1_A \Sp \int_{l}^H \right)(A \Sp 1_H) = \ashpp.
\end{equation}

Recall that we fixed that the left integral $t$ in $H$ and the right integral $T$ in $H^\ast$ are such that $T(t) = 1$.

Consider, then, $(\Phi,e)$ the Frobenius system for $A \Sp H / A$ previously described.

We define maps $\alpha$ and $\beta$ as
\begin{eqnarray*}
\alpha\colon A & \longrightarrow & \ashpp \\
a & \longmapsto & (1_A \Sp t)(a \Sp 1_H)
\end{eqnarray*}
and $\beta$ as the restriction of $\Phi$ to $\ashpp$, that is,
\begin{eqnarray*}
\beta\colon \ashpp & \longrightarrow & A \\
a \Sp h & \longmapsto & \Phi(a \Sp h) = a T(h).
\end{eqnarray*}

\begin{obs}
	Since $\dim_{\Bbbk} \int_l^H = 1$, for any $a \in A$, $h \in H$ and $s \in \int_l^H$, we have that:
	\[
		(1_A \Sp h)(1_A \Sp s)(a \Sp 1_H) = (h_1 \Ape 1_A \Sp h_2 s) (a \Sp 1_H) = (h \Ape 1_A \Sp 1_H)(1_A \Sp s)(a \Sp 1_H).
	\]
	In other words, 
	\begin{eqnarray}\label{contidonofixo}
	\left( 1_A \Sp \int_l^H \right)(A \Sp 1_H) \subseteq \ashpp
	\end{eqnarray}
	which guarantees that $\alpha$ is well defined.
\end{obs}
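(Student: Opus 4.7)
The plan is to establish the two displayed equalities by direct computation and then observe that they literally amount to the required inclusion. The key ingredients are just the multiplication rule of the partial smash product, the fact that $\imath(a) = a \Sp 1_H$ gives an algebra embedding $A \hookrightarrow A \Sp H$, and the defining property of a left integral $h s = \varepsilon(h) s$ for all $h \in H$. There is no serious obstacle here; the observation is a routine compatibility check needed to justify the codomain of $\alpha$.

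For the first equality, I would simply unfold the multiplication in $A \Sp H$:
\[
(1_A \Sp h)(1_A \Sp s) = 1_A(h_1 \Ape 1_A) \Sp h_2 s = (h_1 \Ape 1_A) \Sp h_2 s,
\]
and multiplying both sides on the right by $(a \Sp 1_H)$ preserves this. For the second equality, the left-integral property applied in the second tensorand gives $h_2 s = \varepsilon(h_2)s$, so
\[
(h_1 \Ape 1_A) \Sp h_2 s = (h_1 \varepsilon(h_2) \Ape 1_A) \Sp s = (h \Ape 1_A) \Sp s.
\]
On the other hand, the multiplication rule together with $1_H \Ape 1_A = 1_A$ yields
\[
(h \Ape 1_A \Sp 1_H)(1_A \Sp s) = (h \Ape 1_A)(1_H \Ape 1_A) \Sp 1_H s = (h \Ape 1_A) \Sp s,
\]
and multiplying by $(a \Sp 1_H)$ on the right gives the stated second equality.

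For the inclusion, take an arbitrary generator $x := (1_A \Sp s)(a \Sp 1_H)$ of $(1_A \Sp \int_l^H)(A \Sp 1_H)$. Using associativity in $A \Sp H$ and the two equalities just proved,
\[
(1_A \Sp h)\, x = \bigl[(1_A \Sp h)(1_A \Sp s)\bigr](a \Sp 1_H) = \bigl[(h \Ape 1_A \Sp 1_H)(1_A \Sp s)\bigr](a \Sp 1_H).
\]
Since $\imath \colon A \to A \Sp H$, $b \mapsto b \Sp 1_H$, is an algebra embedding through which $A \Sp H$ becomes an $A$-bimodule, the rightmost expression is exactly $(h \Ape 1_A) \cdot x$. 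This is precisely the condition defining $\ashpp$, so $x \in \ashpp$. Linearity of the bracket $(1_A \Sp -)(- \Sp 1_H)$ in each slot then gives the full inclusion $(1_A \Sp \int_l^H)(A \Sp 1_H) \subseteq \ashpp$. Finally, since $t \in \int_l^H$, we have $\alpha(a) = (1_A \Sp t)(a \Sp 1_H) \in \ashpp$ for every $a \in A$, so $\alpha$ is well-defined.
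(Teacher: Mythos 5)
Your computation is correct and follows exactly the paper's route: unfold the product $(1_A \Sp h)(1_A \Sp s)$ via the smash multiplication rule, apply the left-integral identity $h_2 s = \varepsilon(h_2)s$ to get $(h\Ape 1_A)\Sp s$, match it with $(h\Ape 1_A\Sp 1_H)(1_A\Sp s)$, and read off the defining condition of $\ashpp$. Nothing is missing.
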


\begin{prop}
	$A$ is isomorphic to $\ashpp$\ (as a left \ah-module) via $\alpha$, with inverse $\beta$.
\end{prop}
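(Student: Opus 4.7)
The plan is to verify that $\alpha$ and $\beta$ are mutually inverse left $\ah$-linear maps. The argument splits naturally into three parts.

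First, I would dispatch the easy direction $\beta\alpha = \mathrm{id}_A$ using only the right $A$-linearity of $\Phi$ (Proposition~\ref{PhiAbil}) and $T(t) = 1$:
\[
\beta(\alpha(a)) = \Phi\bigl((1_A \Sp t)(a \Sp 1_H)\bigr) = \Phi(1_A \Sp t)\,a = T(t)\,a = a.
\]

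For the harder direction $\alpha\beta = \mathrm{id}_{\ashpp}$, the plan is to exploit the Frobenius system $(\Phi,e)$ already established. Applying $I\otimes_A \Phi$ to both sides of the centrality relation $ex=xe$ from Proposition~\ref{e1e2r=re1e2}, and using Proposition~\ref{Phiee}\,(\ref{Phiee2}), yields the pointwise identity
\[
x = (1 \Sp t_2)\,\Phi\bigl((1 \Sp S^{-1}(t_1))\,x\bigr),\qquad x \in A\Sp H.
\]
For $x\in\ashpp$ I would then substitute $(1\Sp S^{-1}(t_1))\,x = (S^{-1}(t_1)\Ape 1_A)\,x$ using the defining property of $\ashpp$, and pull the $A$-element out using left $A$-linearity of $\Phi$. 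Everything then collapses to proving the key identity
\[
(1 \Sp t_2)\,\bigl((S^{-1}(t_1)\Ape 1_A) \Sp 1_H\bigr) = 1 \Sp t,
\]
which I expect to be the main obstacle. The plan for it is: expand the left-hand side in $A\# H$, apply axiom (PA4) to rewrite the inner term, and then invoke the antipode identity $s_{(2)}\,S^{-1}(s_{(1)}) = \varepsilon(s)\,1_H$ (obtained by applying $S^{-1}$ to $s_{(1)}S(s_{(2)}) = \varepsilon(s)\,1_H$), together with coassociativity and the counit axiom $\varepsilon(t_{(1)})\,t_{(2)} = t$. The scalar produced by the antipode identity collapses the tower of iterated coproducts back to the single coproduct $\Delta(t)$ appearing in $1\Sp t$.

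Finally, the left $\ah$-linearity of $\beta$ is inherited from the left $A$-linearity of $\Phi$. For $\alpha$, it suffices to verify $(1\Sp t)(x\Sp 1) = (x\Sp 1)(1\Sp t)$ whenever $x\in\ah$; this follows by expanding both products in $A\# H$, using the defining relation $h\Ape x = x(h\Ape 1_A)$ for $x\in\ah$, the fact that in the symmetric setting $H\Ape 1_A$ commutes with $\ah$, and the multiplicative rule $(k_1\Ape 1_A)(k_2\Ape 1_A) = k\Ape 1_A$.
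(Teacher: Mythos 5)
Your proposal is correct and follows essentially the same route as the paper: both directions of the inverse check rest on Proposition~\ref{PhiAbil}, the centrality relation of Proposition~\ref{e1e2r=re1e2}, and Proposition~\ref{Phiee}\,(\ref{Phiee2}), with your ``key identity'' $(1 \Sp t_2)\bigl((S^{-1}(t_1)\Ape 1_A) \Sp 1_H\bigr) = 1 \Sp t$ being exactly the step the paper inserts (silently) at the start of its computation of $\alpha(\beta(a\Sp h))$, and your proposed verification of it via (PA4) and $s_2 S^{-1}(s_1)=\varepsilon(s)1_H$ is sound. The only cosmetic difference is that the paper first checks explicitly that left multiplication by $\ah$ preserves $\ashpp$ (so that the module structure is well defined), whereas in your write-up this closure is only implicit in the $\ah$-linearity and surjectivity of $\alpha$.
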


\begin{proof}
	First note that \ashpp\ is a left \ah-module via the multiplication of $A$. In fact, given $x\in\ah$ and $a\Sp h\in\ashpp$, we have that, for all $k\in H$,
		\begin{eqnarray*}
			(1_A\Sp k)\cdot(x \cdot (a\Sp h))
				&=& (1_A\Sp k) (xa\Sp h)\\
				&=& ((k_1\Ape xa)\Sp k_2h)\\
			{x\in\ah \rightarrow}
				&=& (x(k_1\Ape a)\Sp k_2h)\\
				&=& (x\Sp 1_H)((k_1\Ape a)\Sp k_2h)\\
				&=& (x\Sp 1_H)(1_A\Sp k) (a\Sp h)\\
			{a\Sp h\in\ashpp \rightarrow}
				&=& (x\Sp 1_H)((k\Ape 1_A)\Sp 1_H) (a\Sp h)\\
				&=& (x(k\Ape 1_A)\Sp 1_H) (a\Sp h)\\
			{x\in\ah \rightarrow}
				&=& ((k\Ape 1_A)x\Sp 1_H) (a\Sp h)\\
				&=& ((k\Ape 1_A)\Sp 1_H)(x\Sp 1_H)(a\Sp h)\\
				&=& ((k\Ape 1_A)\Sp 1_H)\cdot(x\cdot(a\Sp h)).
		\end{eqnarray*}
	It follows by the definition of $\alpha$ that it is left \ah-linear.
	
	Now we will see that $\alpha$ and $\beta$ are mutually inverse. In fact, notice that for any $a \in A$ we have that
	\begin{eqnarray*}
	\beta(\alpha(a)) &=& \Phi(\alpha(a))\\
	&=& \Phi((1_A \Sp t)(a \Sp 1_H))\\
	&\overset{\ref{PhiAbil}}{=}& \Phi(1_A \Sp t)a\\
	&=& 1_A T(t) a\\
	&=& a.
	\end{eqnarray*}
	Moreover, if $a \Sp h \in \ashpp$,
	\begin{eqnarray*}
		\alpha(\beta(a \Sp h))
			&=& (1_A \Sp t)(\beta(a \Sp h)\Sp 1_H)\\
			&=& (1_A \Sp t)(\Phi(a \Sp h)\Sp 1_H)\\
			&=& (1_A \Sp t_2)((S^{-1}(t_1) \Ape 1_A) \Sp 1_H)(\Phi(a \Sp h) \Sp 1_H) \\
			&\overset{\ref{PhiAbil}}{=}& (1_A \Sp t_2)(\Phi(((S^{-1}(t_1) \Ape 1_A)\Sp 1_H)(a \Sp h)) \Sp 1_H)\\
		{a \Sp h \in \ashpp \rightarrow}
			&=& (1_A \Sp t_2)[\Phi((1 \Sp S^{-1}(t_1)) (a \Sp h)) \Sp 1_H]\\
			&\overset{\ref{e1e2r=re1e2}}{=}& (a \Sp h) (1_A \Sp t_2) [\Phi(1 \Sp S^{-1}(t_1)) \Sp 1_H]\\
			&\overset{\ref{Phiee} - (\ref{Phiee2})}{=}& a \Sp h.
	\end{eqnarray*}
\end{proof}

Therefore, given $a \Sp h \in \ashpp$, there exists $b \in A$ such that $\alpha(b) = a \Sp h$, that is, $(1_A \Sp t)(b \Sp 1_H) = a \Sp h$. Thus $\ashpp \subseteq \left( 1_A \Sp \int_{l}^H \right)(A \Sp 1_H)$ and, additionally with \eqref{contidonofixo}, it follows that
\begin{eqnarray}\label{aigualdade}
    \ashpp = \left( 1 \Sp \int_l^H \right)(A \Sp 1_H).
\end{eqnarray}

\section{Galois Theory}\label{sec:Galois}

The goal of this section is to prove several equivalences of the definition of Galois extension for a finite dimensional Hopf algebra acting symmetric partially on an algebra. 


Let $t$ be a nonzero left integral in $H$. Recall that we assume in the previous section that $H$ is a finite-dimensional Hopf algebra over $\Bbbk$. Then we have the following isomorphism (of $H^\ast$-modules):
\[
	\begin{alignedat}1
		\theta\colon H^\ast&\longrightarrow H\\
		f&\longmapsto t \leftharpoonup f = f(t_1) t_2.
	\end{alignedat}
\]

Considering $A$ a left partial $H$-module algebra, we have that $A$ is a right partial $H^\ast$-comodule algebra (cf. \cites{CJ,CQ}) with induced coaction given by
\begin{align*}
	\bar{\rho}\colon A	& \longrightarrow A \otimes H^\ast\\
	a 					& \longmapsto \sum_i (h_i \Ape a) \otimes h^\ast_i = a^{0} \otimes a^{1},
\end{align*}
where $\{h_i, h^\ast_i\}$ is a dual basis with relation to $H$ and $H^\ast$. Moreover, notice that this coaction satisfies the following compatibility rule:
\begin{equation}
	h \Ape a = a^0 a^1(h)\label{eq:1}
\end{equation}
In this case, we can consider the reduced tensor $A\Tr H^\ast$, defined in \cite{BV}, which is the following subspace of $A \otimes H^\ast$:
\[
	A \Tr H^\ast = (A \otimes H^\ast) \bar{\rho}(1_A) = \{a 1^0 \otimes f \ast 1^1\mid a\in A, f\in H^\ast\}.
\]


\begin{lema}\label{lema-1}
	Let $A$ be a partial $H$-module algebra. Then $A \Tr H^\ast$ and $A\Sp H$ are isomorphic as left $A$-modules.
\end{lema}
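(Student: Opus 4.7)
The plan is to tensor the classical vector-space isomorphism $\theta\colon H^{\ast}\to H$, $f\mapsto f(t_1)t_2$ (which is available because $H$ is finite-dimensional and $t$ is a nonzero left integral) with the identity on $A$, and to verify that the resulting map restricts to the desired isomorphism. Concretely, set $\widetilde\theta := \mathrm{id}_A \otimes \theta\colon A\otimes H^{\ast} \to A\otimes H$. Since $\theta$ is bijective, $\widetilde\theta$ is a bijective left $A$-module homomorphism (with $A$ acting by left multiplication on the first tensorand). The entire proof therefore reduces to showing that $\widetilde\theta$ carries $A\Tr H^{\ast}$ \emph{exactly} onto $A\Sp H$.

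For the forward inclusion, I would apply $\widetilde\theta$ to a generating element $a\,1\0 \otimes f\ast 1\1$ of $A\Tr H^{\ast}$. Writing the three-fold coproduct of $t$ as $t_1\otimes t_2\otimes t_3$, coassociativity gives $\theta(f\ast 1\1) = (f\ast 1\1)(t_1)\,t_2 = f(t_1)\,1\1(t_2)\,t_3$. Pulling out the scalar $f(t_1)$ and invoking the compatibility \eqref{eq:1} at $a=1_A$, namely $h\Ape 1_A = 1\0\,1\1(h)$, yields
\[
\widetilde\theta(a\,1\0 \otimes f\ast 1\1) \;=\; f(t_1)\,a\,(1\0\,1\1(t_2)) \otimes t_3 \;=\; f(t_1)\,a\,(t_2\Ape 1_A)\Sp t_3.
\]
Setting $h:=\theta(f)=f(t_1)\,t_2$ and applying $\Delta$ gives $f(t_1)\,t_2\otimes t_3 = h_1\otimes h_2$, so the right-hand side is precisely $a(h_1\Ape 1_A)\Sp h_2\in A\Sp H$.

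For the reverse inclusion, given a generator $a(h_1\Ape 1_A)\Sp h_2$ of $A\Sp H$, I would use bijectivity of $\theta$ to pick the unique $f\in H^{\ast}$ with $\theta(f)=h$; the same computation then shows that $\widetilde\theta(a\,1\0 \otimes f\ast 1\1) = a(h_1\Ape 1_A)\Sp h_2$. Combining both inclusions, $\widetilde\theta$ restricts to a left $A$-linear bijection $A\Tr H^{\ast}\to A\Sp H$, proving the lemma. The only substantive step is the single computation above, and the main obstacle is the careful Sweedler bookkeeping needed to identify the three-fold coproduct of $t$ (after passing through $\theta$) with the coproduct of $h=\theta(f)$; once the compatibility rule \eqref{eq:1} is brought into play, the verification is formal.
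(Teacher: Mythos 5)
Your proof is correct and takes essentially the same route as the paper: both transport the problem through $\mathrm{id}_A\otimes\theta$ with $\theta(f)=f(t_1)t_2$ and use the compatibility \eqref{eq:1} (at $1_A$) to check that a generator $a\Tr f$ is sent to $a\Sp\theta(f)$. The only minor difference is that you spell out surjectivity onto $A\Sp H$ via the bijectivity of $\theta$, which the paper leaves implicit in the assertion $(\mathrm{id}\otimes\theta)(A\Tr H^\ast)=A\Sp H$.
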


\begin{proof}
		Let $t\neq0$ a left integral in $H$. Thus, considering the isomorphism $\theta$ between $H^\ast$ and $H$, we have that the map
		\[
			\begin{alignedat}1
				I\otimes\theta\colon A\otimes H^\ast	& \longrightarrow A \# H\\
				a \otimes f								& \longmapsto a \mathrel{\#} \theta(f)
			\end{alignedat}
		\]
		is also a bijection.

		Now, noting that $(I\otimes\theta)(A\Tr H^\ast) = A\Sp H$, we can take
		\[
			\varphi\colon A\Tr H^\ast \longrightarrow A\Sp H
		\]
		as a restriction of $I\otimes\theta$ to $A\Tr H^\ast$, getting the desired isomorphism. Indeed, given $a\in A$ and $f\in H^\ast$, we have that
		\begin{eqnarray*}
			(I \otimes \theta)(a \Tr f)
				& = & (I \otimes \theta)(a 1^0 \otimes f \ast 1^1)\\
				& = & a 1^0 \# (f \ast 1^1) (t_1) t_2 \\
				& = & a 1^0 1^1(t_2) f(t_1) \# t_3 \\
				& \overset{\eqref{eq:1}}{=} & a (t_2 \Ape 1_A ) f(t_1)\# t_3 \\
				& = & (a \# f(t_1)t_2)(1_A \# 1_H)\\
				& = & a \Sp \theta(t).
		\end{eqnarray*}
		
		Since the structures of left $A$-modules are via multiplication in $A$, it follows that $A\Tr H^\ast \simeq A\Sp H$ as $A$-modules.
\end{proof}

Now we finally develop a Galois theory, in order to give several equivalences of the definition of Hopf-Galois partial extension.

\begin{dfn}Let $(A, \bar{\rho})$ be a right partial $H$-comodule algebra. The extension $\acoh\subseteq A$ is said to be  a \emph{Hopf-Galois partial extension} if the canonical map $\Can\colon A\otimes_{\acoh} A\to A\Tr H$ given by $\Can(a\otimes b) = a b^0 \Tr b^1$ is a bijection.

\end{dfn}

\begin{obs}
By \cite{AB}*{Lemma 3}, if $\bar{\rho}$ is the right partial $H^*$-comodule structure of $A$ induced by its left partial $H$-module structure, then $\ah = \acohs$. With an argument symmetrical to what was given in the proof of that result, we have that $\ahr = \acohsr$. Thus, if $A$ is a symmetric $H$-module algebra, it follows that $\acohs = \acohsr$. 
\end{obs}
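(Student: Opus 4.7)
The plan is to follow the strategy of \cite{AB}*{Lemma~3} but with right-handed invariance and coinvariance in place of left-handed, using the compatibility rule \eqref{eq:1} between the partial $H$-action and the induced partial $H^\ast$-coaction. The underlying fact, valid because $H$ is finite-dimensional, is that the natural map $A \otimes H^\ast \to \mathrm{Hom}_{\Bbbk}(H,A)$ sending $\sum_j a_j \otimes f_j$ to $h \mapsto \sum_j a_j f_j(h)$ is bijective, so every identity in $A \otimes H^\ast$ can be checked by evaluating at an arbitrary $h \in H$.

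I would first establish $\ahr = \acohsr$. For $a \in A$, \eqref{eq:1} gives $h \Ape a = a^0 a^1(h)$ and $h \Ape 1_A = 1^0 1^1(h)$, so the right-invariance condition $h \Ape a = (h \Ape 1_A)a$ holding for all $h \in H$ amounts to
\[
    a^0 a^1(h) \;=\; 1^0 a \cdot 1^1(h), \qquad \forall\, h \in H.
\]
Reading both sides as elements of $\mathrm{Hom}_{\Bbbk}(H,A)$ and inverting the isomorphism just mentioned, this is equivalent to the coaction identity
\[
    a^0 \otimes a^1 \;=\; 1^0 a \otimes 1^1 \qquad \text{in } A \otimes H^\ast,
\]
which is exactly $a \in \acohsr$. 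The two inclusions $\ahr \subseteq \acohsr$ and $\acohsr \subseteq \ahr$ come from reading this chain of equivalences in the two directions.

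Granted this right-handed counterpart together with the cited $\ah = \acohs$, the final claim is immediate for a symmetric partial $H$-module algebra: since $H$ is finite-dimensional its antipode is bijective, so the proposition proved in \Cref{sec:preliminares} (combined with the lemma preceding it) yields $\ah = \ahr$, and consequently
\[
    \acohs \;=\; \ah \;=\; \ahr \;=\; \acohsr.
\]

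The main obstacle is really only bookkeeping: one must check that $\bar{\rho}(1_A)\,a = 1^0 a \otimes 1^1$ is indeed the value produced by the same dual-basis computation, and that the induced partial $H^\ast$-coaction inherits enough structure from a symmetric partial $H$-module algebra to make $\acohsr$ a well-defined subalgebra on which the argument runs. Both reductions are routine consequences of \eqref{eq:1} and the finiteness of the chosen dual basis, and no further computation is needed beyond the evaluation argument above.
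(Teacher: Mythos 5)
Your proposal is correct and matches the route the paper intends: the remark itself gives no computation, deferring to the symmetry of the argument in \cite{AB}*{Lemma 3}, and your evaluation argument via the compatibility $h \Ape a = a^0 a^1(h)$ together with the injectivity of $A\otimes H^\ast \to \mathrm{Hom}_{\Bbbk}(H,A)$ is exactly that symmetric argument, after which $\ah=\ahr$ (from the symmetry of the action and bijectivity of $S$) yields $\acohs=\acohsr$.
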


Since $\ah = \acohs$, the canonical map can be considered as $\Can\colon A\otimes_{\ah} A\to A\Tr H^\ast$. Furthermore, \cite{AB}*{Theorem 2} ensures that if \Can is surjective, then it is a bijection.

Now we define the following linear map:
\[
	\begin{alignedat}1
		[~,~]\colon A\otimes_{\ah} A	&\longrightarrow A\Sp H\\
		a\otimes b 					& \longmapsto atb = (a\Sp 1_H)(1_A\Sp t)(b\Sp 1_H)
	\end{alignedat}
\]

By Lemma \ref{lema-1}, we have the first Galois equivalence, as follows.

\begin{prop}\label{cancolchete}
	The canonical map $\Can\colon A\otimes_{\ah} A\to A\Tr H^\ast$ is surjective if and only if $[~,~]\colon A\otimes_{\ah} A\to A\Sp H$ is surjective.
\end{prop}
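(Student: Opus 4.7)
The plan is to show that the triangle commutes, i.e., $\varphi \circ \Can = [~,~]$, where $\varphi\colon A\Tr H^\ast \to A\Sp H$ is the isomorphism constructed in Lemma~\ref{lema-1}. Once this identity is established, the equivalence is immediate: since $\varphi$ is a bijection, $\Can$ is surjective if and only if $\varphi \circ \Can = [~,~]$ is surjective.

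To verify the identity, first recall that $\varphi$ is the restriction of $I\otimes\theta$, so on a typical element $x\Tr f \in A\Tr H^\ast$ it sends $x\Tr f$ to $x\Sp \theta(f) = x\Sp f(t_1)t_2$. Applying this to $\Can(a\otimes b) = a b^0 \Tr b^1$ and pulling out the scalar $b^1(t_1)$, one obtains
\[
    \varphi(\Can(a\otimes b)) = a\, b^0 b^1(t_1) \Sp t_2.
\]
The compatibility rule~\eqref{eq:1} gives $b^0 b^1(t_1) = t_1 \Ape b$, so $\varphi(\Can(a\otimes b)) = a(t_1 \Ape b) \Sp t_2$.

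On the other hand, unfolding $atb = (a\Sp 1_H)(1_A \Sp t)(b\Sp 1_H)$ in $A\#H$ and multiplying the three factors successively, the intermediate product $(1_A\Sp t)(b\Sp 1_H)$ collapses via coassociativity and axiom (PA1) to $(t_1\Ape b)\# t_2$. Multiplying by $a\Sp 1_H$ on the left yields $a(t_1\Ape b)\# t_2$, which lies in $A\Sp H$ by construction and therefore equals $a(t_1\Ape b)\Sp t_2$ as an element of $A\Sp H$. Comparing with the earlier display shows $\varphi\circ\Can = [~,~]$; the only bookkeeping subtlety is pushing the integral through the two multiplications using the appropriate parenthesization of the iterated coproduct of $t$, but no ideas beyond those already in Lemma~\ref{lema-1} are required.
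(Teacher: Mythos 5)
Your proof is correct and is essentially the paper's own argument: both factor $[~,~]=\varphi\circ\Can$ through the isomorphism $\varphi$ of Lemma~\ref{lema-1}, verify the commuting triangle on elementary tensors via the compatibility rule~\eqref{eq:1} to land on $a(t_1\Ape b)\Sp t_2$, and conclude from bijectivity of $\varphi$. No substantive difference.
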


\begin{proof}
	From the definition of $[~,~]$, we have the following commutative diagram:
	\[
	\xymatrix{
		&\ashp&\\
		A\otimes_{\ah} A \ar[ur]^{[ , ]} \ar[rr]^{\Can}&& A\Tr H^\ast \ar[ul]_\varphi\\
	}
	\]
	In fact, given $a, b\in A$, we have that
	\begin{eqnarray*}
		\varphi(\Can(a\otimes b))
		& = & \varphi (a b^0 \Tr b^1)\\
		& = & a b^0 \Sp \theta(b^1)\\
		& = & a b^0 \Sp b^1(t_1)t_2\\
		& = & a b^0 b^1(t_1) \Sp t_2\\
		& = & a (t_1\Ape b) \Sp t_2\\
		& = & (a \Sp 1_H) (1_A \Sp t)(b \Sp 1_H)\\
  		& = & [~,~](a \otimes b)
	\end{eqnarray*}
	Thus, $[~,~] = \varphi~\circ~ \Can$. Therefore $[~,~]$ is surjective if and only if \Can is surjective.
\end{proof}

We now start a construction in order to obtain other equivalences to the definition of Hopf-Galois partial extension. 

Let ${}_\ashp\mathcal{M}$ be the category of the left $\ashp$-modules.

\begin{lema}\label{lema-3}
	If $[~,~]$ is surjective, then $A$ is a generator in ${}_\ashp\mathcal{M}$.
\end{lema}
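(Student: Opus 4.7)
The plan is to use the standard trace–ideal criterion: a left $\ashp$-module $P$ generates ${}_\ashp\mathcal{M}$ if and only if the trace ideal $\mathrm{Tr}(P,\ashp) = \sum_{f\in\mathrm{Hom}_\ashp(P,\ashp)} f(P)$ contains $1_\ashp$. So I would aim to exhibit finitely many $\ashp$-linear maps $A \to \ashp$ whose values at suitable elements of $A$ add up to $1_A \Sp 1_H$.

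First I would record that $A$ is a left $\ashp$-module through the partial action, via $(a\Sp h)\cdot b := a(h\Ape b)$. Using that every element of $\ashp$ has the standard form $a(h_1\Ape 1_A)\Sp h_2$, one checks that this formula is well defined and compatible with the multiplication of $\ashp$; this is the expected restriction of the usual $A\#H$-module structure of $A$.

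Second, for each $b\in A$ I would define the candidate hom
\[
\phi_b\colon A\longrightarrow \ashp,\qquad \phi_b(a) = [a,b] = (a\Sp 1_H)(1_A\Sp t)(b\Sp 1_H).
\]
The key calculation is that $\phi_b$ is left $\ashp$-linear. For $u=a\Sp h\in\ashp$ and $x\in A$, one expands $u\cdot\phi_b(x) = (a\Sp h)(x\Sp 1_H)(1_A\Sp t)(b\Sp 1_H)$ inside $\ashp$; the crucial point is that, since $t$ is a left integral, $h_2 t = \varepsilon(h_2)t$, so the middle factor collapses to give $(a(h\Ape x)\Sp t)(b\Sp 1_H) = \phi_b(u\cdot x)$. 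This is the only substantive calculation in the proof.

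Third, I would use surjectivity of $[~,~]$ to pick $a_1,\dots,a_n, b_1,\dots,b_n\in A$ with
\[
\sum_{i=1}^{n}[a_i,b_i] \;=\; 1_A\Sp 1_H,
\]
so that $\sum_i \phi_{b_i}(a_i)=1_\ashp$. The map $\Phi\colon A^n\to\ashp$, $(x_1,\dots,x_n)\mapsto\sum_i\phi_{b_i}(x_i)$, is then a left $\ashp$-linear map whose image contains $1_\ashp$, hence is all of $\ashp$. Therefore $\ashp$ is a homomorphic image of $A^n$, so $A$ generates ${}_\ashp\mathcal{M}$. The main obstacle is the $\ashp$-linearity check in step two, since one has to handle the juggling between the non-unital smash product structure and the behaviour of the left integral $t$ under multiplication by arbitrary elements of $H$; everything else is formal.
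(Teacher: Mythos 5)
Your proposal is correct and is essentially the paper's own argument: the paper's map $\psi(a_1,\dots,a_n)=\sum_i a_i t c_i$ is exactly your $\Phi=\sum_i\phi_{b_i}$, its $\ashp$-linearity rests on the same collapse $h_2t=\varepsilon(h_2)t$, and the conclusion that $\ashp$ is an epimorphic image of $A^{(n)}$ is the same generator criterion you invoke via the trace ideal. The only cosmetic difference is that the paper verifies surjectivity of $\psi$ by evaluating it on $(a\Sp h)\triangleright(b_1,\dots,b_n)$ rather than citing that the image is a left submodule containing $1_A\Sp 1_H$.
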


\begin{proof}
Assume that $[~,~]$ is surjective. Then there exist $\{b_i, c_i\}_{i=1}^n \subseteq A$ such that $1_A \Sp 1_H = \sum\limits_{i = 1}^n b_itc_i$. Define $\psi\colon A^{(n)} \to \ashp$ by $\psi(a_1, \cdots, a_n) = \sum\limits_{i = 1}^n a_itc_i$. Clearly, $\psi$ is additive. Observe that $A$ is a left $\ashp$-module via $(a\Sp h) \triangleright b = a(h \cdot b)$. Also notice that $A^{(n)}$ has a left $\ashp$-module structure induced naturally by the left $\ashp$-module structure of $A$. We will represent this action with the same symbol $\triangleright$. We shall prove that:

\noindent \textbf{(i)} $\psi$ is a left $\ashp$-linear map.

Indeed,

	\begin{eqnarray*}
			\psi((a\Sp h)\triangleright(a_1, \cdots, a_n))
				&=& \psi((a\Sp h)\triangleright a_1, \cdots, (a\Sp h)\triangleright a_n))\\
				&=& \psi(a(h \cdot a_1), \cdots, a(h \cdot a_n))\\
				&=& \sum\limits_{i = 1}^n a(h \cdot a_i)tc_i) = \sum_{i = 1}^n (a[(h \cdot a_i)t]c_i\\
				&=& \sum\limits_{i = 1}^n a[((h \cdot a_i)\Sp 1_H)(1_A \Sp t)]c_i\\
				&=& \sum\limits_{i = 1}^n a[(h \cdot a_i)\Sp t]c_i\\
				&=& \sum\limits_{i = 1}^n a[(h_1 \cdot a_i)\Sp \epsilon_H(h_2)t]c_i\\
				&=& \sum\limits_{i = 1}^n a[(h_1 \cdot a_i)\Sp h_2t]c_i\\
				&=& \sum\limits_{i = 1}^n a[(1_A \Sp h)(a_i \Sp t)]c_i\\
				&=& \sum\limits_{i = 1}^n (a \Sp h)(a_itc_i) = (a \Sp h)\sum\limits_{i = 1}^n (a_itc_i) \\
				&=& (a \Sp h)\psi(a_1, \cdots, a_n).
		\end{eqnarray*}

\noindent \textbf{(ii)} $\psi$ is surjective. 

In fact, let $a\Sp h \in \ashp$. Since $1_A \Sp 1_H = \sum\limits_{i = 1}^n b_itc_i$, we have that $$1_A \Sp 1_H = \psi(b_1, \cdots, b_n).$$Thus 

\begin{eqnarray*}
\psi((a\Sp h)\triangleright(b_1, \cdots, b_n)) 
				&=& (a\Sp h)\psi(b_1, \cdots, b_n)\\
				&=&  (a\Sp h)(1_A \Sp 1_H) = a\Sp h.
\end{eqnarray*}

Therefore $A$ is a generator in ${}_\ashp\mathcal{M}$.

\end{proof}
	
	


Before we proceed with the next result, we need to define an important $\Bbbk$-vector space. We already know that $H$ cannot be immersed in \ashp, so given a \ashp-module $M$, it does not have the ``induced" structure of $H$-module. Althought $M$ is not a $H$-module, we will use the same notation for the fixed part of $M$ by the action of $H$, given by the following subspace of $M$:
	\[
		M\Hp = \{ m\in M\mid (1_A\Sp h)\cdot m = ((h\Ape 1_A)\Sp 1_H)\cdot m,~\forall h\in H\}
	\]

\begin{obs}\label{obs:parte-fixa}
	In the global case, we have that the fixed part of $M$ by $H$ is given by
		\[
			M^{\rm H} = \{ m\in M\mid h\cdot m = \varepsilon(h) m,~\forall h\in H\}.
		\]
	However, since $H$ is immersed in $A\# H$ via $\imath\colon h\mapsto 1_A\# h$ and since $h \triangleright 1_A = \varepsilon(h) 1_A$, we have that
		\begin{align*}
			h \cdot m
				&= (1_A\# h) \cdot m\\
	\intertext{and}
			\varepsilon(h)m
				&= \varepsilon(h)(1_A \# 1_H) \cdot m\\
				&= ((h\triangleright 1_A)\# 1_H) \cdot m.
		\end{align*}
		
	Therefore $M^{\rm H}$ is exactly the same described in the partial case.
\end{obs}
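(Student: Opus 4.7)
The aim is to verify that in the global case, the subspace $M^{\underline{\mathrm{H}}}$ defined via the partial-action notation coincides with the classical fixed subspace $M^{\mathrm{H}} = \{m\in M\mid h\cdot m = \varepsilon(h)m\}$. My plan is to show that the single defining equation of $M^{\underline{\mathrm{H}}}$, namely $(1_A\#h)\cdot m = ((h\cdot 1_A)\#1_H)\cdot m$, can be rewritten into the classical condition by using just two inputs: the algebra embedding $\imath\colon H\hookrightarrow A\# H$, $h\mapsto 1_A\# h$, and the defining identity $h\cdot 1_A = \varepsilon(h)1_A$ of a global $H$-module algebra.

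First I would check that $\imath$ really is a unital algebra homomorphism in the global case, which amounts to the short calculation
\begin{equation*}
(1_A\# h)(1_A\# g) = (h_1\cdot 1_A)\# h_2 g = \varepsilon(h_1)\,1_A\# h_2 g = 1_A\# hg,
\end{equation*}
using $h_1\cdot 1_A = \varepsilon(h_1)1_A$. This legitimizes the identification $h\cdot m = \imath(h)\cdot m = (1_A\# h)\cdot m$ between the classical $H$-action on $M$ and the restricted $A\# H$-action. With that in place, the left-hand side of the partial condition is just $h\cdot m$. For the right-hand side I would use the global identity once more: $((h\cdot 1_A)\#1_H)\cdot m = (\varepsilon(h)\,1_A\# 1_H)\cdot m = \varepsilon(h)m$, since $1_A\# 1_H$ is the unit of $A\# H$. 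Collating the two rewrites, the partial condition reads $h\cdot m = \varepsilon(h)m$, which is exactly the classical $M^{\mathrm{H}}$ condition; reading the same chain backwards gives the converse inclusion, so $M^{\underline{\mathrm{H}}} = M^{\mathrm{H}}$.

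There is no real obstacle: the argument is a pure unwrapping of definitions. The only content is the observation that the two inputs above are precisely what distinguishes the global case from the genuinely partial one — in the partial setting $\imath$ fails to be multiplicative and $h\Ape 1_A$ need not equal $\varepsilon(h)1_A$, which is exactly why the more elaborate description of $M^{\underline{\mathrm{H}}}$ is needed in general.
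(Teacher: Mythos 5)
Your proposal is correct and follows essentially the same route as the paper's remark: identify $h\cdot m$ with $(1_A\# h)\cdot m$ via the embedding $\imath$, and use $h\cdot 1_A=\varepsilon(h)1_A$ to rewrite $((h\cdot 1_A)\#1_H)\cdot m$ as $\varepsilon(h)m$. Your extra check that $\imath$ is multiplicative in the global case is a welcome (if implicit in the paper) justification of the identification.
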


\begin{lema}\label{lema-4}
	Suppose that $A$ is a finitely generated projective right $\ah$-module and $\Pi\colon \ashp \to End(A_{\ah})$ given by $\Pi(a \Sp h)(b) =  (a \Sp h) \Acao b$ is an isomorphism of algebras and of left $A$-modules. Then, for all $M \in _{\ashp} \M$, the map $\mu\ud{M}\!\colon A \otimes_{\ah} M\Hp \to M$ is an isomorphism of left $\ashp$-modules.
\end{lema}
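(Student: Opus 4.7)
The plan is to construct an explicit left $\ashp$-linear inverse $\nu_M$ to $\mu_M(a\otimes m)=(a\Sp 1_H)\cdot m$ using a dual basis for $A_{\ah}$, and then exploit the observation that every $m\in M\Hp$ extends to a left $\ashp$-linear map $A\to M$.

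First I would check that $\mu_M$ is well-defined on the tensor (with $x\cdot m:=(x\Sp 1_H)\cdot m$ as the natural left $\ah$-action on $M\Hp$, which remains in $M\Hp$ by Proposition~2.2) and left $\ashp$-linear; this reduces to the symmetric identity $(k_{1}\Ape a)(k_{2}\Ape 1_A)=k\Ape a$ combined with the defining property of $M\Hp$. Next, fix a dual basis $\{(x_i,f_i)\}_{i=1}^n$ of $A_{\ah}$ with $f_i\in\mathrm{Hom}_{\ah}(A,\ah)$ and $a=\sum_i x_i f_i(a)$, and set $\xi_i:=\Pi^{-1}(f_i)\in\ashp$, where $f_i$ is regarded as an element of $\mathrm{End}(A_{\ah})$ via $\ah\hookrightarrow A$. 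Since $f_i(a)\in\ah=\ahr$, Proposition~2.2 gives $h\Ape f_i(a)=(h\Ape 1_A)f_i(a)$, so the endomorphisms $\Pi((1\Sp h)\xi_i)$ and $\Pi(((h\Ape 1_A)\Sp 1_H)\xi_i)$ agree on $A$; by injectivity of $\Pi$ the equality lifts to $\ashp$, and applying it to $m\in M$ shows $\xi_i\cdot m\in M\Hp$. Define $\nu_M(m):=\sum_i x_i\otimes \xi_i\cdot m$. The $\ashp$-linearity of $\nu_M$ is verified using the identity $\xi_i(a\Sp h)=\sum_j(f_i(a(h\Ape x_j))\Sp 1_H)\xi_j$ (confirmed through $\Pi$) together with the fact that the $\ah$-valued scalars $f_i(a(h\Ape x_j))$ move across the tensor. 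The identity $\mu_M\circ\nu_M=\mathrm{id}_M$ follows from $\sum_i(x_i\Sp 1_H)\xi_i=1_{\ashp}$ (both sides act as $\mathrm{id}_A$ via $\Pi$).

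The main obstacle is the converse $\nu_M\circ\mu_M=\mathrm{id}$, which by $\ashp$-linearity reduces to showing $\nu_M(m)=1_A\otimes m$ for $m\in M\Hp$. Here the key observation is that for each such $m$ the assignment $\widehat{m}\colon A\to M$, $\widehat{m}(a):=(a\Sp 1_H)\cdot m$, is left $\ashp$-linear; this is a direct check using the defining relation of $M\Hp$ together with the symmetric identity. Consequently, for any $\xi\in\ashp$ and $m\in M\Hp$, the $\ashp$-linearity of $\widehat{m}$ yields $\xi\cdot m=\xi\cdot\widehat{m}(1_A)=\widehat{m}(\xi\triangleright 1_A)=(\Pi(\xi)(1_A)\Sp 1_H)\cdot m$. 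Specializing to $\xi=\xi_i$ gives $\xi_i\cdot m=(f_i(1_A)\Sp 1_H)\cdot m$, and hence $\nu_M(m)=\sum_i x_i\otimes(f_i(1_A)\Sp 1_H)\cdot m=\sum_i x_i f_i(1_A)\otimes m=1_A\otimes m$, using that $f_i(1_A)\in\ah$ moves across the tensor and $\sum_i x_i f_i(1_A)=1_A$. Combining with $\ashp$-linearity now gives $\nu_M\mu_M(a\otimes m)=(a\Sp 1_H)\cdot\nu_M(m)=(a\Sp 1_H)\cdot(1_A\otimes m)=a\otimes m$, as required.
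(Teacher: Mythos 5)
Your proposal is correct and follows essentially the same route as the paper: the same dual basis $\{x_i,f_i\}$, the same candidate inverse $\nu_M(m)=\sum_i x_i\otimes \Pi^{-1}(f_i)\cdot m$ (well-defined because the $\Pi^{-1}(f_i)$ land in $\ashpp$), and the same two key identities $\sum_i (x_i\Sp 1_H)\Pi^{-1}(f_i)=1_{\ashp}$ and $(a\Sp h)(b\Sp 1_H)\cdot m=(\Pi(a\Sp h)(b)\Sp 1_H)\cdot m$ for $m\in M\Hp$ — your observation that $\widehat m$ is $\ashp$-linear is exactly the paper's equation for that last identity, and your reduction of $\nu_M\circ\mu_M=\mathrm{id}$ to the case $a=1_A$ via linearity of $\nu_M$ is only a cosmetic reorganization of the paper's direct computation.
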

\begin{proof}
	Since $A$ is a finitely generated projective right $\ah$-module, there exists a finite dual basis, that is, there exist $a_i\in A$ and $f_i\in{\rm Hom}_\ah(A, \ah)$, with $i\in\{1,\ldots,n\}$, such that, for all $a\in A$,
	\[
		a = \sum\limits_{i=1}^{n} a_i f_i(a)
	\]
	Since $\Pi$ is surjective, there exist $x_1,\ldots, x_n$ in \ashp\ such that $\pi(x_i) = f_i$, for all $i=1,\cdots,n$.
	
	Notice that these $x_i$'s are in \ashpp. Indeed, given $h\in H$ and $a\in A$, we have that
	\begin{eqnarray*}
		\Pi((1_A \Sp h) x_i)(a)
			&=& \Pi(1_A \Sp h)(\Pi(x_i)(a))\\
			&=& \Pi(1_A \Sp h)(f_i(a))\\
			&=& 1_A (h \Ape f_i(a))\\
			&=& (h \Ape 1_A)f_i(a)\\
			&=& (h \Ape 1_A)\Pi(x_i)(a)\\
			&=& \Pi(((h \Ape 1_A)\Sp 1_H)x_i)(a).
	\end{eqnarray*}Since $a$ is arbitrary and $\Pi$ is injective, it follows that $x_i \in\ashpp$.
	
	Given $M\in{}_\ashp\M$, consider the following linear map: 
	\[
		\begin{alignedat}1
			\mu_M\colon A\otimes_\ah M\Hp	& \longrightarrow M\\
			a\otimes m 						& \longmapsto (a\Sp 1_H) \cdot m,
		\end{alignedat}
	\]which is clearly well defined.

  Initially let us see that the following property holds:
	if $a\Sp h\in \ashp, b\in A$ and $m\in M\Hp$, then
	\begin{equation}\label{mmn-1}
		(a\Sp h)(b\Sp 1_H)\cdot m = (a(h\Ape b)\Sp 1_H)\cdot m = \Pi(a\Sp h)(b)\Sp 1_H\cdot m
	\end{equation}
	In fact,
	\begin{eqnarray*}
		(a\Sp h)(b\Sp 1_H)\cdot m
			&=&a(h_1\Ape b)\Sp h_2\cdot m\\
			&=&(a(h_1\Ape b)\Sp 1_H)(1_A\Sp h_2)\cdot m\\
			&=&(a(h_1\Ape b)\Sp 1_H)\cdot((1_A\Sp h_2)\cdot m)\\
			&=&(a(h_1\Ape b)\Sp 1_H)\cdot(((h_2\Ape 1_A)\Sp 1_H) \cdot m)\\
			&=&(a(h_1\Ape b)\Sp 1_H)((h_2\Ape 1_A)\Sp 1_H) \cdot m\\
			&=&((a(h_1\Ape b)(h_2\Ape 1_A))\Sp 1_H) \cdot m\\
			&=& (a(h\Ape b)\Sp 1_H)\cdot m\\
			&=& \Pi(a\Sp h)(b)\Sp 1_H\cdot m.
	\end{eqnarray*}
	
	Since $A$ is \ashp-module, it follows that $A\otimes_\ah M\Hp$ also is. Thus we have that
	\begin{eqnarray*}
		\mu_M((a\Sp h)\cdot (b\otimes m))
			&=&\mu_M(((a\Sp h)\cdot b)\otimes m)\\
			&=&\mu_M(a(h\Ape b)\otimes m)\\
			&=&(a(h\Ape b)\Sp 1_H)\cdot m\\
			&=&(a\Sp 1_H)\cdot[((h\Ape b)\Sp 1_H)\cdot m]\\
			&\overset{\eqref{mmn-1}}=&(a\Sp 1_H)\cdot[(1\Sp h)(b\Sp 1_H)\cdot m]\\
			&=&[(a\Sp 1_H)(1\Sp h)(b\Sp 1_H)]\cdot m\\
			&=&[(a\Sp h)(b\Sp 1_H)]\cdot m\\
			&=&(a\Sp h)\cdot[(b\Sp 1_H)\cdot m]\\
			&=&(a\Sp h)\cdot\mu_M(b\otimes m),
	\end{eqnarray*}
	Therefore $\mu_M$ is a homomorphism of \ashp-modules.
	
	To see that $\mu\ud{M}$ is bijective, consider the following map:
	\[
		\begin{alignedat}1
			\nu\ud{M}\colon M	& \longrightarrow A\otimes_\ah M\Hp\\
					m	& \longmapsto \sum_i a_i \otimes x_i\cdot m,
		\end{alignedat}
	\]which is well defined because given $h\in H$,
	\begin{eqnarray*}
		(1_A \Sp h)\cdot(x_i \cdot m)
			&=& ((1_A \Sp h) x_i) \cdot m\\ x_i \in \ashpp \to
			&=& (((h\Ape 1_A) \Sp 1_H) x_i) \cdot m\\
			&=& ((h\Ape 1_A) \Sp 1_H) \cdot (x_i \cdot m).
	\end{eqnarray*}
	
	Our goal is to show that $\nu$ and $\mu$ are mutually inverse. Although, before that, we need to prove the following equality:
	\begin{equation}\label{mmn-4}
		\sum_i (a_i\Sp 1_H) x_i = 1_A \Sp 1_H.
	\end{equation}
	Indeed, given $a\in A$, we have that
	\begin{eqnarray*}
		\Pi(\sum_i (a_i\Sp 1_H) x_i)(a)
			&=&\sum_i a_i\Pi(x_i)(a)\\
			&=&\sum_i a_i f_i(a)\\
			&=& a\\
			&=& \Pi(1_A\Sp 1_H)(a).
	\end{eqnarray*}Since $a$ is arbitrary and $\Pi$ is injective, the equality holds.
	
	Now we finally show that $\nu\ud{M}$ is the inverse of $\mu\ud{M}$. Indeed,
	\begin{eqnarray*}
		\mu(\nu(m))
			&=&\mu(\sum_i a_i \otimes x_i\cdot m)\\
			&=&\sum_i (a_i\Sp 1_H) \cdot (x_i\cdot m)\\
			&=&\sum_i ((a_i\Sp 1_H) x_i)\cdot m\\
			&\overset{\eqref{mmn-4}}=& 1_A\Sp 1_H\cdot m\\
			&=& m.
	\end{eqnarray*}
	
	By the other side,
	\begin{eqnarray*}
		\nu(\mu(a\otimes m))
			&=& \nu((a\Sp 1_H) \cdot m)\\
			&=& \sum_i a_i \otimes (x_i \cdot ((a\Sp 1_H) \cdot m))\\
			&=& \sum_i a_i \otimes (x_i (a\Sp 1_H) \cdot m)\\
			&\overset{\eqref{mmn-1}}=& \sum_i a_i \otimes \Pi(x_i)(a)\Sp 1_H \cdot m\\
			&=& \sum_i a_i \otimes f_i(a)\Sp 1_H \cdot m\\
			&=& \sum_i a_i f_i(a) \otimes 1_A\Sp 1_H \cdot m\\
			&=& a \otimes m,
	\end{eqnarray*}which concludes the proof.
.
\end{proof}

From ring theory, we have the following result. 

\begin{teo}\label{lema-5}\cite{W}*{Theorem~18.8}
	Let $R$ be a unital associative ring with identity $1_R$, $M$ be a left $R$-module and consider $S = \text{End}({}_R M)$. Then the following statements are equivalent:
	\begin{enumerate}[\indent (i)]
		\item $M$ is a generator in ${}_R \M$;
		\item $M$ a finitely generated projective left $S$-module and  $\varphi\colon R \to \text{End}({}_S M)$ given by $\varphi(r)(m) = r \cdot m$ is an isomorphism of rings and of $R$-modules.\qed
	\end{enumerate}
\end{teo}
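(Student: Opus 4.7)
The proof is a classical Morita-theoretic argument relating generators, finitely generated projectives, and the double centralizer isomorphism. I would establish the two implications separately, leveraging the characterization of a generator via a finite ``dual basis'' in $\text{Hom}_R(M,R)$.

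For (i) $\Rightarrow$ (ii), the starting point is the standard reformulation: $M$ is a generator in ${}_R\mathcal{M}$ if and only if there exist finitely many $m_1,\ldots,m_n \in M$ and $f_1,\ldots,f_n \in \text{Hom}_R(M,R)$ with $\sum_{i=1}^n f_i(m_i) = 1_R$. This follows because the trace ideal $\tau(M) = \sum_{f} f(M)$ is a two-sided ideal of $R$, and $M$ generates iff $\tau(M) = R$, so $1_R$ lies in a finite partial sum. Such a family is a dual basis exhibiting $M$ as a finitely generated projective left $S$-module, once the $f_i$ are transported to $S$-linear maps via the natural $R$-$S$-bimodule structure on $\text{Hom}_R(M,R)$. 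For the map $\varphi\colon R \to \text{End}({}_S M)$: injectivity follows since $\varphi(r)=0$ forces $r = r\cdot 1_R = \sum_i f_i(rm_i) = 0$; surjectivity follows by showing that for $\psi \in \text{End}({}_S M)$ the element $r_\psi := \sum_i f_i(\psi(m_i))$ satisfies $\psi(m) = r_\psi m$ for every $m \in M$, using $S$-linearity of $\psi$ applied to the endomorphisms $g_{m_i,f_i}$ built from the dual basis.

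For (ii) $\Rightarrow$ (i), I would pick a dual basis $\{n_j, g_j\}_{j=1}^k$ exhibiting $M$ as a finitely generated projective left $S$-module, so that $m = \sum_j g_j(m)\cdot n_j$ with $g_j \in \text{Hom}_S(M,S)$. Using the ring isomorphism $\varphi\colon R \xrightarrow{\sim} \text{End}({}_S M)$, each $g_j$ together with evaluation at a chosen test element produces a map in $\text{Hom}_R(M,R)$, and summing these against the $n_j$ recovers $1_R$, so $M$ is a generator via the characterization above.

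The main obstacle will be keeping the bimodule conventions straight: one must verify that the $R$-valued maps $f_i$ translate correctly to $S$-valued maps in the right direction, and that $\varphi$ intertwines the two distinct dual basis data (one for $R$-valued and one for $S$-valued Hom-sets). Since the result is a standard theorem from module theory (\cite{W}*{Theorem~18.8}), the strategy is to follow the double centralizer formalism faithfully rather than look for shortcuts.
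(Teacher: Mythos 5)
The paper does not actually prove this statement: it is imported verbatim from Wisbauer (\cite{W}*{Theorem~18.8}) and stamped with \qed, so there is no internal argument to compare yours against. Your sketch is a correct rendition of the standard double-centralizer proof, and every step you outline is sound. In (i)$\Rightarrow$(ii) the trace-ideal characterization $\sum_i f_i(m_i)=1_R$ is the right starting point; the ``transport'' of the $f_i$ to $S$-valued maps is concretely $\lambda_i(m)=\bigl(x\mapsto f_i(x)\,m\bigr)$, which is left $S$-linear because every $s\in S$ is $R$-linear, and $\{m_i,\lambda_i\}$ is then a dual basis for ${}_SM$; your formulas for injectivity ($r=\sum_i f_i(rm_i)$) and surjectivity ($r_\psi=\sum_i f_i(\psi(m_i))$, using $\psi(\lambda_i(m)\cdot m_i)=\lambda_i(m)\cdot\psi(m_i)$) both check out. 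In (ii)$\Rightarrow$(i) the phrase ``evaluation at a chosen test element'' is the one loose spot: what is needed is the curried map $h_j(m'):=\varphi^{-1}\bigl(x\mapsto g_j(x)(m')\bigr)$, which lands in $\text{Hom}_R(M,R)$ because $x\mapsto g_j(x)(m')$ is left $S$-linear (so lies in $\text{End}({}_SM)=\varphi(R)$) and $\varphi$ intertwines the $R$-actions; then $\sum_j h_j(n_j)=\varphi^{-1}(\mathrm{id}_M)=1_R$ exhibits $M$ as a generator. Finally, note the convention: taking $M$ to be a \emph{left} $S$-module forces $st=s\circ t$ in $S=\text{End}({}_RM)$, which is opposite to Wisbauer's right-hand convention and is exactly why the paper later instantiates $S$ as $\ahop$ in Proposition~\ref{lema-6}; keeping your bimodule bookkeeping in that convention removes the ambiguity you flag at the end.
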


The theorem above shall be adapted to our context. For this, we initially prove that  $\ahop \simeq \text{End}({}_{A\Sp H} A)$ as rings, where $\ahop$ is the opposite algebra of $\ah$. 

\begin{prop}\label{lema-6}
	Let $A$ be a symmetric partial $H$-module algebra. Then $\ahop$ is isomorphic to $End(_{\ashp} A)$ as ring.
\end{prop}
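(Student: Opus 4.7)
The plan is to exhibit the isomorphism explicitly as right multiplication. Define
\[
    \Psi\colon \ahop \longrightarrow \text{End}({}_{\ashp} A), \qquad \Psi(x)(a) = ax.
\]
First I will check that $\rho_x := \Psi(x)$ is indeed left $\ashp$-linear. The nontrivial identity to verify is $\rho_x((a\Sp h)\Acao b) = (a\Sp h)\Acao \rho_x(b)$, that is, $a(h\Ape b)\,x = a(h_1\Ape b)(h_2\Ape x)$. Since $A$ is symmetric, \Cref{sec:preliminares} gives $\ah = \ahr$, so $x\in\ah$ satisfies $h_2\Ape x = (h_2\Ape 1_A)\,x$. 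Substituting and using (PA1) on the left-hand side,
\[
    (h\Ape b)\,x = (h_1\Ape b)(h_2\Ape 1_A)\,x = (h_1\Ape b)(h_2\Ape x),
\]
yielding the desired equality. Thus $\Psi$ lands in $\text{End}({}_{\ashp} A)$.

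Next I will check that $\Psi$ is a ring homomorphism with the opposite multiplication. For $x,y\in\ah$ and $a\in A$,
\[
    \Psi(x\cdot_{op}y)(a) = \Psi(yx)(a) = ayx = \rho_x(\rho_y(a)) = (\Psi(x)\circ\Psi(y))(a),
\]
and $\Psi(1_A) = \mathrm{id}_A$. Injectivity is immediate: if $\Psi(x) = 0$ then $x = 1_A\cdot x = \Psi(x)(1_A) = 0$.

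For surjectivity, given $f\in \text{End}({}_{\ashp} A)$, set $x := f(1_A)$. Using the $\ashp$-linearity of $f$, for every $a\in A$ we have
\[
    f(a) = f((a\Sp 1_H)\Acao 1_A) = (a\Sp 1_H)\Acao f(1_A) = a\,f(1_A) = ax,
\]
so $f = \rho_x$. It remains to see $x\in\ah$. Computing $f(h\Ape 1_A)$ in two ways, on one hand $f(h\Ape 1_A) = (h\Ape 1_A)x$ by the formula just derived; on the other hand,
\[
    f(h\Ape 1_A) = f((1_A\Sp h)\Acao 1_A) = (1_A\Sp h)\Acao x = h\Ape x.
\]
Hence $h\Ape x = (h\Ape 1_A)x$ for every $h\in H$, i.e., $x\in\ahr = \ah$, where again the last equality uses the symmetry of the partial action. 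Thus $\Psi$ is a ring isomorphism.

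The only real subtlety is the $\ashp$-linearity check and the surjectivity argument, both of which rely crucially on the identification $\ah = \ahr$; without symmetry the preimage $x = f(1_A)$ would only be known to lie in $\ahr$, and $\rho_x$ would not in general preserve the $\ashp$-action. Everything else is a direct verification.
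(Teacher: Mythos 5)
Your proof is correct and follows essentially the same route as the paper's: the isomorphism is right multiplication $x\mapsto (a\mapsto ax)$, the $\ashp$-linearity and the membership $f(1_A)\in\ah$ both reduce to the identity $h\Ape(bx)=(h\Ape b)x$ for $x\in\ahr=\ah$, and surjectivity comes from evaluating at $1_A$. The paper performs exactly these verifications, so there is nothing to add.
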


\begin{proof}
	Consider the linear map
	\[
		\begin{alignedat}1
			\varphi\colon \ahop	& \longrightarrow {\rm End}\,(_{\ashp} A)\\
			a					& \longmapsto \varphi(a)\colon b \longmapsto R_a(b)= ba
		\end{alignedat}
	\]
	that is well defined because
	\begin{eqnarray*}
		\varphi(a)(c\Sp h \Acao b)
			&=& c(h \Ape b)a\\
			&=& c(h \Ape ba) \\
			&=& {c\Sp h} \Acao ba \\
			&=& {c\Sp h} \Acao \varphi(a)(b).
	\end{eqnarray*}
	
	Therefore, $\varphi(a) \in End(_{\ashp}A)$ for all $a \in \ahop$. Furthermore, $\varphi$ is injective because $A$ has identity. It only remains us to show that $\varphi$ is surjective. For this, take $f \in End(_{\ashp}A)$ and note that $f(1_A) \in \ah$. In fact, given $h\in H$, we have that
	\begin{eqnarray*}
		h \cdot f(1_A)
			&=& {1_A \Sp h} \Acao f(1_A) \\
			&=& f({1_A \Sp h} \Acao 1_A) \\
			&=& f(h \Ape 1_A) \\
			&=& f({h\Ape 1_A \Sp 1_H} \Acao 1_A)\\
			&=& {h\Ape 1_A \Sp 1_H} \Acao f(1_A) \\
			&=& (h\Ape 1_A) f(1_A).
	\end{eqnarray*}
	Moreover, $\varphi(f(1_A)) = f$ and then $\varphi$ is surjective.
	
We also have that $\varphi$ is a homomorphism of algebras. Clearly, $\varphi(1_A) = I_A$ and, given $a,b,c \in A$, we have that
	\begin{eqnarray*}
		\varphi(a \cdot_{op} b)(c)
			&=& c (a \cdot_{op} b)\\
			&=& c\,b\, a\\
			&=& \varphi(a) (\varphi(b) (c)),
	\end{eqnarray*}
	which ends the proof.
\end{proof}

\begin{obs}
	We know that $A$ is a left $\ashp$-module via $(a\Sp h) \Acao b = a (h\Ape b)$ and also a left $\ahop$-module via $x \Acao  a = a\, x$. Moreover, $A$ is a $(\ah, \ashp)$-bimodule. Indeed, if $x \in \ahop$, $a \Sp h\in \ashp$ and $b \in A$, then
	\begin{align*}
		x \Acao ((a \Sp h) \Acao b)
			&= x \Acao (a(h \Ape b))\\
			&= a(h \Ape b)x\\ x\in\ah \to
			&= a(h \Ape bx)\\
			&= a(h \Ape (x \Acao b))\\
			&= (a\Sp h) \Acao (x \Acao b).
	\end{align*}
\end{obs}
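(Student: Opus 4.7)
The observation asserts that, with $A$ carrying the left $\ashp$-action $(a\Sp h)\Acao b = a(h\Ape b)$ and the left $\ahop$-action $x\Acao b = b\,x$, the two actions commute, i.e.
\[
    x \Acao \bigl((a\Sp h)\Acao b\bigr) = (a\Sp h) \Acao (x \Acao b)
\]
for $x\in\ah$, $a\Sp h\in\ashp$, $b\in A$. Unwinding both sides, the left-hand side equals $a(h\Ape b)\,x$ and the right-hand side equals $a\bigl(h\Ape(bx)\bigr)$; cancelling the common left factor $a$, the compatibility reduces to the single identity
\[
    (h\Ape b)\,x = h\Ape(bx) \qquad \text{for all } h\in H,\; b\in A,\; x\in\ah.
\]

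My plan is to prove this identity by expanding both sides with (PA1) and then exploiting the special features of $\ah$ under the standing symmetric hypothesis. For the right-hand side, (PA1) gives $h\Ape(bx) = (h_1\Ape b)(h_2\Ape x)$, and since $x\in\ah$ one can substitute $h_2\Ape x = x\,(h_2\Ape 1_A)$ by the very definition of the left invariant subalgebra. For the left-hand side, applying (PA1) to $b = b\cdot 1_A$ gives the decomposition $h\Ape b = (h_1\Ape b)(h_2\Ape 1_A)$, hence $(h\Ape b)\,x = (h_1\Ape b)(h_2\Ape 1_A)\,x$. The two expressions then match as soon as one knows that $(h_2\Ape 1_A)$ and $x$ commute — and this is exactly the proposition established earlier in \Cref{sec:preliminares}, which guarantees that $H\Ape 1_A$ commutes with $\ahr$ whenever the partial action is symmetric with bijective antipode (together with the identification $\ah=\ahr$ that holds in the symmetric setting).

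I do not expect any serious obstacle: all three ingredients — the decomposition $h\Ape b = (h_1\Ape b)(h_2\Ape 1_A)$ coming from (PA1), the defining relation $h\Ape x = x(h\Ape 1_A)$ for $x\in\ah$, and the commutation of $H\Ape 1_A$ with $\ah$ — have been recorded in the preliminaries of the paper. The only care required is in tracking the order of factors, since $\ah$ need not be central in $A$; the symmetric hypothesis provides precisely the minimal centrality (between $H\Ape 1_A$ and $\ah$) needed to close the calculation.
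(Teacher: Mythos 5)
Your proposal is correct and follows essentially the same computation as the paper: both sides reduce the bimodule compatibility to the single identity $(h\Ape b)\,x = h\Ape(bx)$ for $x\in\ah$, which the paper asserts with only the annotation ``$x\in\ah$'' and which you actually verify. One economy worth noting: your verification invokes the commutation of $H\Ape 1_A$ with $\ah$, and hence the symmetric hypothesis and the bijectivity of $S$, but this is not needed for this step. Since $\ah\subseteq\ahr$ holds for any left partial $H$-module algebra (first lemma of the preliminaries), one has $h_2\Ape x=(h_2\Ape 1_A)x$ directly, so
\[
h\Ape(bx)=(h_1\Ape b)(h_2\Ape x)=(h_1\Ape b)(h_2\Ape 1_A)x=\bigl(h\Ape(b\,1_A)\bigr)x=(h\Ape b)x
\]
with no commutation step at all. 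Your argument is nonetheless valid under the standing hypotheses of the section, where $A$ is symmetric and $H$ is finite-dimensional.
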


In Theorem \ref{lema-5}, take $\ashp$ as the ring $R$, $A$ as the module $M$, and $\ahop$ as the ring $S = {\rm End}\,({}_R M)$. Therefore we obtain the following result.

\begin{teo}\label{observ}
	With the same notations above, the following statements are equivalent:
	\begin{enumerate}[(1)]
		\item $A$ is finitely generated projective left $\ah$-module  and  $\Pi: \ashp \to End(A_{\ah})$ given by $\Pi({a \Sp h})(b) =  {a \Sp h} \Acao b$ is an isomorphism of  rings and of left $\ashp$-modules;\label{observ-1}
		\item $A$ is a generator in the category $_{\ashp} \M$.\label{observ-2}\qed
	\end{enumerate}\relax
\end{teo}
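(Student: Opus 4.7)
The plan is to obtain this theorem as a direct translation of the Morita-type criterion recalled in Theorem \ref{lema-5}, applied with $R=\ashp$ and $M=A$. What must be verified is that, in our setting, the endomorphism ring $S=\mathrm{End}({}_{R}M)$ together with its canonical right action on $M$ can be identified with the data appearing in (\ref{observ-1}).

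First I would invoke Proposition \ref{lema-6}: the map $\ahop\to\mathrm{End}({}_{\ashp}A)$ sending $x$ to right multiplication by $x$ is a ring isomorphism. Under this identification $S\simeq\ahop$ as rings, and the canonical right action of $S$ on $A$ becomes right multiplication by elements of $\ah$. Consequently, a ``left $S$-module'' is the same thing as a ``right $\ah$-module'', and $\mathrm{End}({}_{S}M)=\mathrm{End}(A_{\ah})$, so ``$A$ is a finitely generated projective left $S$-module'' is exactly the finite projectivity condition of (\ref{observ-1}).

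Next I would check that the structural ring map $\varphi\colon R\to\mathrm{End}({}_{S}M)$ furnished by Theorem \ref{lema-5}, namely $r\mapsto(m\mapsto r\cdot m)$, translates under these identifications to the map $\Pi\colon\ashp\to\mathrm{End}(A_{\ah})$ of (\ref{observ-1}), $\Pi(a\Sp h)(b)=(a\Sp h)\Acao b$. That $\Pi(a\Sp h)$ actually lands in $\mathrm{End}(A_{\ah})$ is precisely the content of the $(\ashp,\ahop)$-bimodule observation recorded just before the theorem: the left $\ashp$-action and the right $\ah$-action on $A$ commute. Applying Theorem \ref{lema-5} then yields the equivalence (\ref{observ-1})$\Leftrightarrow$(\ref{observ-2}), with $\Pi$ a ring isomorphism in the former case.

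The ``left $\ashp$-module'' half of (\ref{observ-1}) is then automatic: the natural left $\ashp$-structure on $\mathrm{End}(A_{\ah})$ defined by $(a\Sp h)\cdot f:=\Pi(a\Sp h)\circ f$ makes $\Pi$ left $\ashp$-linear by construction. I expect no substantive obstacle; the only care needed is in the bookkeeping of left vs.\ right module structures when passing between $\mathrm{End}({}_{\ashp}A)$ and $\ahop$, which the bimodule remark preceding the statement renders purely formal.
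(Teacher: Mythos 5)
Your proposal is correct and follows exactly the paper's own route: the paper likewise obtains the theorem by specializing Theorem \ref{lema-5} with $R=\ashp$ and $M=A$, identifying $S=\mathrm{End}({}_{\ashp}A)$ with $\ahop$ via Proposition \ref{lema-6}, so that left $S$-modules become right $\ah$-modules and $\varphi$ becomes $\Pi$. Your bookkeeping even correctly records that the projectivity condition is over $A$ as a \emph{right} $\ah$-module (consistent with Theorem \ref{teo-pHG}(\ref{teo-pHG-4})), where the statement of Theorem \ref{observ} misprints ``left''.
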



So we can finally state the main result of this work.

\begin{teo}[Galois~Equivalences]\label{galois}\label{teo-pHG}
	Let $A$ be a symmetric partial $H$-module algebra and $0 \neq t \in \int_{l}^H$. The following statements are equivalent:
	\begin{enumerate}\Not{pHG}
		\item $A$ is $H^\ast$-Galois ($\Can\colon ~ A \otimes_{\ah} A \to A \Tr H^\ast$ is surjective);\label{teo-pHG-1}
		\item $[~,~]\colon A \otimes_{\ah} A \to \ashp$ given by $[~,~](a \otimes b) = atb$ is surjective;\label{teo-pHG-2}
		\item There exist $\{x_i\}_{i=1}^n$, $\{y_i\}_{i=1}^n$ in $A$ such that for all $h \in H$ we have that $T(h)1_H = \sum\limits_{i=1}^n x_i(h \cdot y_i)$ for $T \in \int_{r}^{H^\ast}$;\label{teo-pHG-3}
		\item $A$ is a generator in the category $_{\ashp} \M$;\label{teo-pHG-5}
		\item $A$ is a finitely generated projective right $\ah$-module and $\Pi: \ashp \to End(A_{\ah})$ given by $\Pi({a \Sp h})(b) =  {a \Sp h} \Acao b$ is an isomorphism of  rings and of left $\ashp$-modules;\label{teo-pHG-4}
		\item $A$ is a finitely generated projective right $\ah$-module and for all $M \in _{\ashp}\M$ the map $\mu\ud{M}\colon A \otimes_{\ah} M\Hp \to M$ given by $\mu\ud{M}(a \otimes_{\ah} m) = a \Acao m$ is an isomorphism of left $\ashp$-modules;\label{teo-pHG-6}
		\item $\mu\colon A \otimes_\ah (\ashpp) \to \ashp$ given by $\mu(a \otimes_{\ah} x) = (a \Sp 1) x$ is surjective.\label{teo-pHG-7}
	\end{enumerate}
\end{teo}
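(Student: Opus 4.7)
The plan is to establish all seven statements equivalent by assembling a cycle of implications, taking full advantage of the preparatory results proved in this section. Specifically, $(\ref{teo-pHG-1}) \Leftrightarrow (\ref{teo-pHG-2})$ is Proposition \ref{cancolchete}, $(\ref{teo-pHG-2}) \Rightarrow (\ref{teo-pHG-5})$ is Lemma \ref{lema-3}, $(\ref{teo-pHG-5}) \Leftrightarrow (\ref{teo-pHG-4})$ is Theorem \ref{observ}, and $(\ref{teo-pHG-4}) \Rightarrow (\ref{teo-pHG-6})$ is Lemma \ref{lema-4}. It only remains to close the cycle with $(\ref{teo-pHG-6}) \Rightarrow (\ref{teo-pHG-7}) \Rightarrow (\ref{teo-pHG-2})$ and to insert $(\ref{teo-pHG-3})$ into the chain via $(\ref{teo-pHG-2}) \Leftrightarrow (\ref{teo-pHG-3})$.

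For $(\ref{teo-pHG-6}) \Rightarrow (\ref{teo-pHG-7})$, I would apply the hypothesis to the particular module $M = \ashp$, viewed as a left $\ashp$-module by multiplication. A short verification using the same manipulation already employed in Lemma \ref{lema-4} shows that $M\Hp = \ashpp$, and the map $\mu_M$ then coincides with the map $\mu$ of $(\ref{teo-pHG-7})$; hence the latter is an isomorphism, in particular surjective. For $(\ref{teo-pHG-7}) \Rightarrow (\ref{teo-pHG-2})$, I would invoke the description $\ashpp = (1_A \Sp \int_l^H)(A \Sp 1_H)$ established in equation (\ref{aigualdade}). Since $\int_l^H$ is one-dimensional with basis $t$, every element of $\ashpp$ has the form $(1_A \Sp t)(b \Sp 1_H)$, and this rewriting transports surjectivity of $\mu$ directly into surjectivity of $[~,~]$.

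For $(\ref{teo-pHG-2}) \Leftrightarrow (\ref{teo-pHG-3})$, assume first $(\ref{teo-pHG-3})$. A Sweedler computation inside $A \# H$ gives
\[
\sum_i x_i\, t\, y_i \;=\; \sum_i x_i (t_1 \Ape y_i) \# t_2 \;=\; T(t_1) 1_A \# t_2 \;=\; 1_A \# T(t_1) t_2 \;=\; 1_A \# 1_H,
\]
using Proposition \ref{th1h2}(i). Then for any $c \Sp k \in \ashp$, expanding $(c \Sp k) \sum_i x_i t y_i = c \Sp k$ and simplifying with the identity $(k_1 \Ape x_i)(k_2 \Ape 1_A) = k \Ape x_i$ (a direct consequence of the partial action axiom applied to $x_i = x_i \cdot 1_A$) yields $c \Sp k = \sum_i c(k \Ape x_i)\, t\, y_i$, placing $c \Sp k$ in the image of $[~,~]$. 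Conversely, if $(\ref{teo-pHG-2})$ holds, choose $x_i, y_i$ with $\sum_i x_i t y_i = 1_A \Sp 1_H$, which translates to $\sum_i x_i (t_1 \Ape y_i) \otimes t_2 = 1_A \otimes 1_H$ in $A \otimes H$. Applying the functional $\varphi_h \in H^\ast$ characterized by $\varphi_h(t_2) t_1 = h$ (which exists because $f \mapsto f(t_2) t_1$ is an isomorphism $H^\ast \to H$ for the left integral $t$) produces $\sum_i x_i (h \Ape y_i) = \varphi_h(1_H) 1_A$, and applying $\varphi_h$ to the right-integral identity $T(t_1) t_2 = 1_H$ of Proposition \ref{th1h2}(i) forces $\varphi_h(1_H) = T(h)$.

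The main subtlety I anticipate is the $(\ref{teo-pHG-2}) \Rightarrow (\ref{teo-pHG-3})$ direction, which requires selecting the correct dual isomorphism $H^\ast \to H$ and verifying its compatibility with the chosen right integral $T$; the other arrows are essentially bookkeeping of the preparatory lemmas, together with a careful identification of $(\ashp)\Hp$ with $\ashpp$.
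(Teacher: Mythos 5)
Your proof is correct, and for six of the seven nodes it follows exactly the paper's own skeleton: $(\ref{teo-pHG-1})\Leftrightarrow(\ref{teo-pHG-2})$ via Proposition \ref{cancolchete}, $(\ref{teo-pHG-2})\Rightarrow(\ref{teo-pHG-5})$ via Lemma \ref{lema-3}, $(\ref{teo-pHG-5})\Leftrightarrow(\ref{teo-pHG-4})$ via Theorem \ref{observ}, $(\ref{teo-pHG-4})\Rightarrow(\ref{teo-pHG-6})$ via Lemma \ref{lema-4}, $(\ref{teo-pHG-6})\Rightarrow(\ref{teo-pHG-7})$ by specializing to $M=\ashp$ (where $M\Hp=\ashpp$ holds by definition, so nothing needs verifying), and $(\ref{teo-pHG-7})\Rightarrow(\ref{teo-pHG-2})$ via \eqref{aigualdade}. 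The one genuine divergence is how you attach item $(\ref{teo-pHG-3})$: the paper proves $(\ref{teo-pHG-1})\Rightarrow(\ref{teo-pHG-3})$ by passing through the induced right partial $H^\ast$-comodule structure and the evaluation map $A\otimes H^\ast\to\mathrm{Hom}(A\otimes H,A)$, and then $(\ref{teo-pHG-3})\Rightarrow(\ref{teo-pHG-2})$ by a direct computation of $\sum_i(a(h\Ape x_i)\Sp t)(y_i\Sp 1_H)=a\Sp h$ using the identity $T(S(h)t_1)t_2=h$. You instead prove $(\ref{teo-pHG-2})\Leftrightarrow(\ref{teo-pHG-3})$ entirely inside the smash product: for $(\ref{teo-pHG-3})\Rightarrow(\ref{teo-pHG-2})$ you first establish $\sum_i x_ity_i=1_A\Sp 1_H$ and then left-multiply by $c\Sp k$, using $k_3t=\varepsilon(k_3)t$ and $(k_1\Ape x_i)(k_2\Ape 1_A)=k\Ape x_i$; for $(\ref{teo-pHG-2})\Rightarrow(\ref{teo-pHG-3})$ you apply the functional $\varphi_h$ with $t_1\varphi_h(t_2)=h$ (legitimate by Remark \ref{tt1}, since $f\mapsto t_1f(t_2)$ is bijective for the left integral $t$) and pin down $\varphi_h(1_H)=T(h)$ from $T(t_1)t_2=1_H$. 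Both routes are sound; yours buys a more self-contained, comodule-free derivation of $(\ref{teo-pHG-3})$ whose ``tensor'' element $\sum_i x_i\otimes y_i$ is visibly the preimage of $1_A\Sp 1_H$ under $[~,~]$, while the paper's version makes the link between $(\ref{teo-pHG-3})$ and the Galois map $\Can$ more transparent.
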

\begin{proof}
	
	(\eqref{teo-pHG-1} $\iff$ \eqref{teo-pHG-2}) It follows by Proposition \ref{cancolchete}.

	(\eqref{teo-pHG-1} $\implies$ \eqref{teo-pHG-3}) Since the canonical map is surjective, take $\{x_i\}_{i=1}^n,\ \{y_i\}_{i=1}^n$ such that $\Can(\sum\limits_{i=1}^n x_i\otimes y_i) = 1\Tr T = 1^0 \otimes T\ast 1^1.$
	
	Thus, consider the following linear map:
	\begin{eqnarray*}
	\varphi\colon A\otimes H^\ast &\longrightarrow& Hom(A\otimes H, A)\\
	a\otimes f& \longmapsto& \varphi(a\otimes f) (b\otimes h) = ab f(h).
	\end{eqnarray*}
	Therefore, we have that
	\begin{eqnarray*}
	1_A T(h)
	&=& 1^0 \varepsilon_{H^\ast}(1^1)T(h)\\
	&=& 1^0 (T\ast 1^1)(h)\\
	&=& \varphi (1_0\otimes T\ast 1^1)(1_A\otimes h)\\
	&=& \varphi (\Can(\sum\limits_{i=1}^n x_i\otimes y_i))(1_A\otimes h)\\
	&=& \varphi (\sum\limits_{i=1}^n x_i {y_i}_0\otimes {y_i}_1)(1_A\otimes h)\\
	&=& \sum\limits_{i=1}^n x_i {y_i}_0 {y_i}_1(h)\\
	&=& \sum\limits_{i=1}^n x_i (h \cdot y_i)
	\end{eqnarray*}

	(\eqref{teo-pHG-3} $\implies$ \eqref{teo-pHG-2}) Remember that, considering $t\neq0$ a left integral in $H$ and $T\neq0$ a right integral in $H^\ast$ such that $T(t)=1$, we have that $T(h_1)h_2 = T(h)1_H$ for all $h$ in $H$ and, moreover,  $T(S(h)t_1)t_2 = h$.

	Now, given $a\Sp \in\ashp$, we have that
\begin{eqnarray*}
	\sum\limits_{i=1}^n(a(h \cdot x_i) \Sp t)(y_i \# 1)
		&=& \sum\limits_{i=1}^n a(h \cdot x_i)(t_1 \cdot y_i) \Sp t_2\\
		&=& \sum\limits_{i=1}^n a(h_1 \cdot (x_i (S(h_2)t_1 \cdot y_i))) \Sp t_2\\
		&=& a(h_1 \cdot T(S(h_2)t_1) 1_A) \Sp t_2\\
		&=& a(h_1 \cdot 1_A) \Sp T(S(h_2)t_1)t_2\\
		&=& a(h_1 \cdot 1_A) \Sp h_2 \\
		&=& a \Sp h.
\end{eqnarray*}
	
	Thus $[~,~](\sum\limits_{i=1}^n(a(h \cdot x_i) \Sp t)\otimes_{\ah}(y_i \Sp 1)) = a \Sp h$, that is, $[~,~]$ is surjective.
		
	(\eqref{teo-pHG-2} $\implies$ \eqref{teo-pHG-5}) It follows from Lemma \ref{lema-3}.
	
	(\eqref{teo-pHG-5} $\iff$ \eqref{teo-pHG-4}) It follows from Theorem \ref{observ}.

	(\eqref{teo-pHG-4} $\implies$ \eqref{teo-pHG-6}) It follows from Lemma \ref{lema-4}.

	(\eqref{teo-pHG-6} $\implies$ \eqref{teo-pHG-7}) It is enough to consider $M=\ashp$.

	(\eqref{teo-pHG-7} $\implies$ \eqref{teo-pHG-2}) Recall that the map
	\begin{eqnarray*}
		\alpha\colon A & \longrightarrow & \ashpp\\
		a & \longmapsto & (1_A\Sp t)(a\Sp 1_H),
	\end{eqnarray*} is an isomorfism of $\Bbbk$-vector spaces and notice that, clearly, it is left \ah-linear. Therefore, since $(1_A \Sp t)(A\Sp 1_H) = \ashpp$ by \eqref{aigualdade}, we have that $I\otimes_\ah\alpha\colon A\otimes_\ah A \to A\otimes_\ah (\ashpp)$  is surjective.
	
	Furthermore, it is straightforward to see that $[~,~] = \mu\circ(I\otimes_\ah \alpha)$. Since, by hypothesis, $\mu$ is surjective, then [~,~] is also surjective.
\end{proof}

\bibliographystyle{abbrv}

\end{document}